\newcommand{\amsprimary}[1]{{\footnotesize\noindent AMS 2010 \textit{Mathematics subject
classification:} Primary #1\vspace{1pc}}}
\newcommand{\keywordsnames}[1]{{\footnotesize\noindent\textit{Key words:} #1\vspace{1pc}}}
\DeclareMathOperator{\sgn}{sgn}
\newtheorem{theorem}{Theorem}
\newtheorem{teo}{Theorem}
\newtheorem{prop}[teo]{Proposition}
\newtheorem{cor}[teo]{Corollary}
\newtheorem{lema}[teo]{Lemma}
\theoremstyle{definition}
\theoremstyle{remark}
\title{Stability of geometric flows on the circle}
\author{Jean Cortissoz \and C\'esar Reyes}
\thanks{The second author wants to thank the Fondo de Investigaciones de la Facultad de  Ciencias  de  la  Universidad  de  los  Andes for funding this project by means of: "Convocatoria  2017-2  para  la  Financiación  de proyectos de  Investigación  Categoría  Estudiantes de Doctorado", project named:"Flujo de Ricci sobre el cilindro con frontera".}
\date{}
\date{}
\begin{document}

\maketitle

\begin{abstract}
    In this paper we prove a general stability result for higher order geometric flows on the
    circle, which basically states that if the initial condition
    is close to a round circle, the curve evolves smoothly and exponentially fast 
    towards a circle, and we improve on known
    convergence rates (which we believe are almost sharp). The polyharmonic flow is an instance of the flows to which our result can be applied.
\end{abstract}
 
{\keywordsnames {geometric flows in the circle; stability; convergence rate.}}

{\amsprimary {53C44; 35K55; 58J35.}}

\section{Introduction}

The study of deformation of curves by using flow methods has become an important area of research in Geometric Analysis. In general the
purpose is to deform the curve as follows.
We let 
\[
x:\,\mathbb{S}^1\times\left[0,T\right)\longrightarrow \mathbb{R}^2
\]
be a family of smooth convex embeddings of $\mathbb{S}^1$, the unit circle, into $\mathbb{R}^2$.
and we assume that this family of embeddings satisfy 
an equation of the form
\begin{equation}
\label{generalflow}
\frac{\partial x}{\partial t}=F\left(k,k_{s},\dots, k^{\left(n\right)}_s\right)N,
\end{equation}
where $k$ is the curvature of the embedding and $N$ is the normal vector pointing outwards the region bounded by $x\left(\cdot,t\right)$ and $k^{\left(m\right)}_s$ denotes the $m$-th derivative of $k$ with respect to the arclength parameter.

The dean of all these family of flows is the curve shortening flow, which occurs when in (\ref{generalflow})
we make
$F=-k$. The curve shortening flow has been widely studied and is very well understood 
(\cite{Hamilton}). 
The family of $p$-curve shortening flows is obtained by making
$F=-\frac{1}{p}k^p$ and the family of polyharmonic flows (which gives an example 
of an application of our main result) is obtained with
$F=\left(-1\right)^{p+1}k^{\left(2p\right)}_s$.

In most cases, as in the case of the 
curve shortening and $p$-curve shortening flow 
(see \cite{Andrews, Hamilton})
this flows do what is expected of them: they deform convex embedded curves into circles
(after normalisation).
However, some interesting behavior occurs in the higher order flow (see 
for instance \cite{Giga, Wheeler2}), and even proving that solutions starting close
to a circle exist globally and converge to a circle can be challenging (see \cite{Elliott}).

Our purpose in this paper is to look at the structure of the equation for the curvature of a flow
of the form (\ref{generalflow}),
so that the following stability property can be deduced:
If we start close to a circle (in a sense to be specified below), the curve evolves 
smoothly and exponentially fast towards a circle (though perhaps not the same circle the curve started close to). 

%Let us identify first the structure of the equation for the curvature function which will be related
%to stability, and be aware that we want to apply this to higher order curvature flows
%-for which circles are steady states, as for instance
%the polyharmonic flow.
With this considerations in mind, 
the semilinear parabolic equations we shall consider are of the following general form,
\begin{equation}
\label{eq1:curvatureflow}
\left\{
\begin{array}{l}
\dfrac{\partial k}{\partial t}=\left(-1\right)^{p+1}k^{M}\dfrac{\partial^{2p}k}{\partial \theta^{2p}}
+G\left(k,\dfrac{\partial k}{\partial \theta},\dots,\dfrac{\partial^{2p-1}k}{\partial \theta^{2p-1}}\right)
\quad \left[0,2\pi\right]\times\left(0,T\right),\\
k\left(\theta,0\right)=\psi\left(\theta\right),
\end{array}
\right.
\end{equation}
endowed 
with periodic boundary conditions. In our case $G\left(z_0,\dots,z_{2p-1}\right)$ is any polynomial in $2p$ variables for which the following conditions
hold. Any term $z_0^{\alpha_0}z_1^{\alpha_1}\dots z_{2p-1}^{\alpha_{2p-1}}$ of $G$ satisfies
\begin{itemize}\label{condition}
\item
either $\alpha_1+\alpha_2+\dots+\alpha_{2p-1}> 1$,
\item
or in the case that $\alpha_1+\alpha_2+\dots+\alpha_{2p-1}=1$, there is a $j$ such that
$\alpha_{2j}=1$.
\end{itemize}

 We shall also require something else from solutions to (\ref{eq1:curvatureflow}),
 but before revealing what our requirement is, and to put our results into context, let us first analyze
 the linearisation of the operator on the right hand side of (\ref{eq1:curvatureflow}) around the solution $k\equiv 1$, which is given by
\[
\mathcal{L}u=\left(-1\right)^{p+1}\frac{\partial^{2p}u}{\partial \theta^{2p}}+
\sum_{j=0}^{2p-1}\frac{\partial G}{\partial z_j}\left(1,0,\dots,0\right)\frac{\partial^j u}{\partial \theta^j}.
\]
%We require that for $j$ odd, 
%\[
%\frac{\partial G}{\partial z_j}\left(1,0,\dots,0\right)=0.
%\]
Hence, the eigenvalues of $\mathcal{L}$ of the eigenfunctions $e^{in\theta}$ with $n\in\mathbb{Z}$
can be computed as
%\[
%\mathcal{L}e^{in\theta}=\lambda_n e^{in\theta}.
%\]
%We require that
%for $n>1$, the eigenvalue $\lambda_n$ to be strictly negative and (though this is not absolutely necessary)
%\[
%\left|\lambda_n\right|\leq \left|\lambda_{n+1}\right|,
%\]
%with strict inequality for $n=2$.
%Notice that we can compute
\[
\lambda_n
=-n^{2p}+\sum_{j=0}^{p-1}\left(-1\right)^j\frac{\partial G}{\partial z_{2j}}\left(1,0,\dots,0\right)n^{2j}.
\]
In the case of the higher order curvature flows we want to study, besides having $\lambda_0=0$ we also have that 
$\lambda_1=0$. It is because of this that it might not occur, in principle, that solutions with initial
data close to the constant function 1 will evolve towards a constant function.

On the other hand, solutions 
to (\ref{eq1:curvatureflow}), as long as $k$ is 
the curvature function of a simple closed curve and remains positive, satisfy the identities
\begin{equation}
\label{geometriccondition}
U_{\pm}\left(k\left(\theta,t\right)\right):=\int_{0}^{2\pi}\frac{e^{\pm i\theta}}{k\left(\theta,t\right)}\,d\theta=0.
\end{equation}
These identities are the key to showing exponential convergence of the curvature function
$k$ towards a constant function if 
the initial data $\psi$ is close to a constant function. They give us control over the Fourier coefficient of
the wavenumbers $n=\pm 1$.
So we will require that solutions to (\ref{eq1:curvatureflow}) also satisfies (\ref{geometriccondition}).
We shall call these flows {\it geometric flows}. 

Henceforth, we will use the following semi-norms 
\[
\left\|f\right\|_{\beta}=\max\left\{\sup_{n\neq 0}\left|n\right|^{\beta}\left|Re\left\{\hat{f}(n)\right\}\right|, \sup_{n\neq 0}\left|n\right|^{\beta}\left|Im\left\{\hat{f}(n)\right\}\right|\right\},
\]
where $\hat{f}\left(n\right)$ represents the $n$-th Fourier coefficient of $f$.

Notice also that
\[
\frac{\partial G}{\partial z_{2l}}\left(1,0,\dots,0\right)=\sum_{j_l} a_{j_l}
\]
where (and this notation will be important in the statement of our main result)
\[
a_{j_{l}}\quad \mbox{is the coefficient of the term} \quad k^{j_{l}}\dfrac{\partial^{2l}k}{\partial \theta^{2l}}.
\]
We can now state our main theorem. 
\begin{theorem}
\label{maintheorem}
Consider equation (\ref{eq1:curvatureflow}) with initial data $\psi$ ($U_{\pm}\left(\psi\right)=0$)
a geometric flow.
There exists a $\delta>0$, which may also depend on the $L^{\infty}$-norm of $\psi$, such that if 
\[
\left\|\psi\right\|_{2p+1}\leq \delta\hat{\psi}\left(0\right),
\]
and there exists a $c<0$ such that for all integers $n\geq 2$
\[
\begin{aligned}
P_{n,4\delta}= &-n^{2p}\left(1-4\delta\right)^M\hat{\psi}^M(0)+&\\ 
&\sum_{l=1}^{p-1}\sum_{j_l}\left(-1\right)^l n^{2l}a_{j_l}\left(1+
\sgn\left(\left(-1\right)^l a_{j_l}\right)4\delta\right)^{j_l}\hat{\psi}^{j_l}\left(0\right)<c,&\end{aligned}
\]
then the solution of (\ref{eq1:curvatureflow}) with initial data $\psi$ exists for all time and converges exponentially to a constant
that we shall denote by
$\hat{k}\left(0,\infty\right)$. Moreover, if for every $0<\epsilon<4\delta$
we have the inequality
\[
P_{2,\epsilon}\geq P_{n,\epsilon}
\]
then 
we have the following estimate on the rate of convergence of the solution towards a constant:
for any $\epsilon>0$ there exists a $C_{l,\epsilon}$ such that
\[
\left\|k\left(\theta,t\right)-\hat{k}\left(0,\infty\right)\right\|_{C^l\left(\mathbb{S}^1\right)}\leq C_{l,\epsilon} e^{P_{2,\epsilon} t},
\quad
l= 0, 1, 2, \ldots.
\]
%where $P_{2,\epsilon}$ is given by
%\[
%\begin{aligned}
% -2^{2p}[1-\epsilon]^{M}\hat{k}^{M}(0,\infty) + 
%\sum_{l=1}^{p-1}\sum_{j_l}\left(-1\right)^l2^{2l}a_{j_l}\left(1+
%\sgn\left(\left(-1\right)^l2^{2l}a_{j_l}\right)\epsilon\right)^{j_l}\hat{k}^{j_l}\left(0,\infty\right).\end{aligned}
%\]
\end{theorem}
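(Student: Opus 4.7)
The plan is to work entirely on the Fourier side. Writing $k(\theta,t)=\sum_{n\in\mathbb Z}\hat k(n,t)e^{in\theta}$, equation (\ref{eq1:curvatureflow}) becomes a coupled ODE system
\[
\frac{d}{dt}\hat k(n,t) = \Lambda_n(t)\,\hat k(n,t) + \mathcal N_n(t),
\]
where $\Lambda_n(t)$ is the ``diagonal'' coefficient obtained by linearising every nonlinear factor around the constant $k\equiv\hat k(0,t)$, and $\mathcal N_n(t)$ gathers all convolutions involving at least two non-zero wavenumbers. The structural hypothesis on $G$ is used twice: it forces $\Lambda_n$ to contain only even powers of $n$ (so that the Fourier eigenvalues are real), and it forces $\mathcal N_n$ to be genuinely at least quadratic in the non-zero modes. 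A pointwise bound $(1-4\delta)\hat k(0,t)\le k(\theta,t)\le(1+4\delta)\hat k(0,t)$ then gives $\Lambda_n(t)\le P_{n,4\delta}$ by maximising each ``anti-dissipative'' contribution and minimising each ``dissipative'' one; this is the purpose of the signed factor $\sgn((-1)^l a_{j_l})\,4\delta$ in the definition of $P_{n,4\delta}$.

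The core of the argument is a continuity/bootstrap. Let $T^*$ be the supremum of times on which $\|k(\cdot,t)\|_{2p+1}\le 4\delta\,\hat k(0,t)$ and $\hat k(0,t)\in[(1-4\delta)\hat\psi(0),(1+4\delta)\hat\psi(0)]$. On $[0,T^*)$ each $|n|\ge 2$ satisfies
\[
\frac{d}{dt}\,|\hat k(n,t)| \le P_{n,4\delta}\,|\hat k(n,t)| + |\mathcal N_n(t)|,
\]
and weighting by $|n|^{2p+1}$, taking the sup in $n$, and combining with a multilinear bound on $|n|^{2p+1}|\mathcal N_n(t)|$ in terms of $\|k(\cdot,t)\|_{2p+1}^2$ (crucially using that every derivative appearing in $G$ has order at most $2p-1$) gives exponential decay of $\|k\|_{2p+1}$ at rate $c<0$, once $\delta$ is small enough for the quadratic remainder to be absorbed. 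The neutral modes $n=\pm 1$ are treated separately: expanding $1/k$ around $\hat k(0,t)$ in the identity $U_\pm(k)=0$ shows that $\hat k(\pm 1,t)$ is at least quadratic in the higher modes, so it inherits twice the decay rate. Finally, since neither the dissipation nor any admissible monomial of $G$ contributes at $n=0$ to the linear part, $(d/dt)\hat k(0,t)$ is itself quadratic in the non-zero modes, so $\hat k(0,t)$ is Cauchy and converges to some $\hat k(0,\infty)$ while remaining in the required interval. This closes the bootstrap, yields $T^*=\infty$, and gives exponential convergence of $k$ to $\hat k(0,\infty)$.

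For the sharp rate, the first part guarantees that eventually $\|k(\cdot,t)\|_{2p+1}\le\epsilon\,\hat k(0,t)$ for any $\epsilon>0$. The same differential inequality is rerun with $4\delta$ replaced by $\epsilon$, and the hypothesis $P_{2,\epsilon}\ge P_{n,\epsilon}$ for $n\ge 2$ identifies $n=\pm 2$ as the slowest decaying mode, producing $|\hat k(n,t)|\le C_\epsilon e^{(P_{2,\epsilon}+\eta)t}|n|^{-(2p+1)}$ for any small $\eta>0$. Summation over $n$ with the appropriate polynomial weight then gives the $C^\ell$ bounds by standard Fourier comparison, and $\eta$ is absorbed into $C_{l,\epsilon}$.

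The principal technical obstacle is the multilinear estimate on $\mathcal N_n$ in the $|n|^{2p+1}$-weighted sup-norm used in the second paragraph. Each contributing monomial produces, at mode $n$, a convolution of at least two non-zero factors in which the single derivative weight is at most $|m|^{2p-1}$ (for $G$) or $|m|^{2p}$ (for the dissipation term, when at least one of the $M$ companion factors of $k$ is at a non-zero mode, the all-zero case having been absorbed into $\Lambda_n$). Controlling this via Young-type convolution estimates while preserving the weight $|n|^{2p+1}$ on the left requires careful bookkeeping; the fact that the seminorm weight is exactly one above the highest derivative order appearing in $G$ is what affords the necessary gain of one power of $|n|$, and is the reason for the specific exponent $2p+1$.
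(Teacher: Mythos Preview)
Your proposal is correct and follows essentially the same route as the paper: Fourier decomposition, splitting the evolution of each mode into a diagonal part controlled by $P_{n,4\delta}$ plus a remainder $\mathcal N_n$ that is at least quadratic in the non-zero modes (this is exactly where the structural hypotheses on $G$ enter), use of $U_\pm(k)=0$ to tame the neutral modes $n=\pm 1$, quadratic evolution of $\hat k(0,\cdot)$ so that the average converges, and a continuation argument to close the bootstrap; then a second pass with $\epsilon$ in place of $4\delta$ via Duhamel to obtain the refined rate $P_{2,\epsilon}$.

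One organisational difference is worth flagging. The paper packages its bootstrap as a ``Trapping Lemma'' in the Gevrey-weighted variable $\hat v(n,t)=e^{\gamma|n|t}\hat k(n,t)$, and shows that $\hat v$ remains in the $\|\cdot\|_{2p+1}$-ball; this yields simultaneous exponential decay in $t$ \emph{and} in $|n|$, namely $|\hat k(n,t)|\le C\delta\hat\psi(0)\,e^{-\gamma|n|t}/|n|^{2p+1}$. Your bootstrap, as written, produces only $|\hat k(n,t)|\le C_\epsilon e^{P_{2,\epsilon}t}/|n|^{2p+1}$, so your closing sentence (``summation over $n$ with the appropriate polynomial weight then gives the $C^\ell$ bounds'') only delivers $\ell\le 2p-1$; the sum $\sum|n|^{\ell-2p-1}$ diverges for larger $\ell$. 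This is easily repaired---either by inserting the paper's Gevrey step, or by observing that in the Duhamel representation the homogeneous factor is $e^{P_{n,\epsilon}(t-\tau)}\sim e^{-c\,n^{2p}(t-\tau)}$, which for any fixed $t>\tau$ beats every polynomial weight in $n$ (this is what the paper's final displayed bound $|\hat k(n,t)|\le K e^{P_{n,\epsilon}(t-\tau)}$ is actually recording)---but it is not quite ``standard Fourier comparison'' from the estimate you state.
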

An aside is in place here: to verify that the assumption on $P_{n,4\delta}$ holds for $\delta>0$ small, it
is enough to verify that it holds for $\delta=0$.

As we said before, the main difficulty to prove Theorem \ref{maintheorem} is the fact that the eigenvalue $\lambda_1$ is equal to 0, and 
thus we cannot expect smooth exponential convergence of the function $k\left(\cdot,t\right)$ towards a constant. 
The reader should compare our discussion with 
Theorem 1 (and the remark right after its statement) in Simon's celebrated paper
\cite{Simon}.
We deal with this issue using the approach presented in \cite{cortissoz17} for the $p$-curve shortening flow: that
is, we make use of (\ref{geometriccondition}) to control the Fourier coefficients corresponding to the
wavenumber $n=\pm 1$ in terms of the Fourier coefficients for the higher wavenumbers; one difference
with the family of flows considered in \cite{cortissoz17} to those considered in this paper, is
the fact that in that family of flows the average of the curvature blows-up and this helps on
sharpening the convergence rates, whereas in the flows considered in this
paper the average of the curvature remains bounded (but also large enough). 
In any case, though we do not obtain sharp rates of convergence, our method allows
us to give better convergence rates than the known ones (which are basically close to the
expected rate $e^{\lambda_2t}$).

Notice that the requirement in Theorem \ref{maintheorem} on how close the initial condition must be to a constant (in this case its average)
can be also recasted in terms of the 
H\"older 
$C^{2p,\alpha}\left(\mathbb{S}^1\right)$ spaces ($0<\alpha<1$) or the Sobolev spaces $H^{2p+1}\left(\mathbb{S}^1\right)$. 

To show how our main theorem applies, we will first consider the 
case of the polyharmonic flows. For polyharmonic flows, assuming that
the initial curve encloses an area of $\pi$, then $\hat{k}\left(0,\infty\right)=1$ (see Theorem 1 in \cite{Wheeler}), and 
\[
P_{\epsilon}=-2^{2p+2}\left(1-\epsilon\right)^{2p+2}+\left(1+\epsilon\right)^{2p+2}2^{2p},
\]
so we obtain the following corollary.
\begin{cor}
\label{applicationpolyharmonic}
Consider equation (\ref{eq1:curvatureflow}) corresponding to the 
case when $$F=\left(-1\right)^{p} k_{s}^{\left(2p\right)},$$ i.e. when (\ref{eq1:curvatureflow})
represents the evolution of the curvature in the case of the polyharmonic flows, with initial
condition $\psi$. There exists a $\delta>0$, which may depend on the
$L^{\infty}$ norm of $\psi$, such that if the curvature of the initial curve satisfies
\begin{equation}
\label{initialconditionregularity}
\left\|\psi\right\|_{2p+3}\leq \delta \hat{\psi}\left(0\right),
\end{equation}
and the area enclosed by it is equal to $\pi$,
then the solution of (\ref{eq1:curvatureflow}) with initial data $\psi$ exists for all time and converges smoothly and exponentially 
towards the constant function 1 (i.e., the curve is evolving towards a unit circle). 
Moreover, we have the following estimate on the rate of convergence of the solution towards a constant:
for any $\epsilon>0$ there exists a $C_{l,\epsilon}$ such that
\[
\left\|k\left(\theta,t\right)-1\right\|_{C^l\left(\mathbb{S}^1\right)}\leq C_{l,\epsilon} e^{P_{\epsilon} t},
\quad
l= 0, 1, 2, \ldots;
\]
where $P_{\epsilon}$ is given by
\[
\begin{aligned}
 -2^{2p+2}\left(1-\epsilon\right)^{2p+2}+\left(1+\epsilon\right)^{2p+2}2^{2p}.\end{aligned}
\]
\end{cor}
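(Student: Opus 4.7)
The plan is to reduce Corollary \ref{applicationpolyharmonic} to Theorem \ref{maintheorem} by recasting the curvature equation of the polyharmonic flow as an instance of (\ref{eq1:curvatureflow}) and then computing $P_{n,\epsilon}$ explicitly.

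First I would derive the evolution PDE for $k=k(\theta,t)$ using the support-function identities. In the turning-angle parametrisation $\partial_s = k\,\partial_\theta$ and $\partial_t(1/k) = F + F_{\theta\theta}$, so for $F = (-1)^{p+1}k_s^{(2p)}$ the curvature satisfies
\[
\partial_t k = (-1)^{p}\bigl(\partial_s^{2p+2}k + k^{2}\,\partial_s^{2p}k\bigr).
\]
Expanding $(k\partial_\theta)^m k$, a short combinatorial argument shows that the only monomial that is linear in the derivatives $\{\partial_\theta^{j}k : j\ge 1\}$ is $k^m\partial_\theta^m k$, every other monomial containing at least two derivative factors. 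Consequently the equation fits (\ref{eq1:curvatureflow}) with $p$ replaced by $p+1$ and $M = 2p+2$, and the only nonzero coefficient in the sense of the theorem is $a_{2p+2} = (-1)^{p}$ at $l = p$. The remainder of the right-hand side lies in $G$ and, being quadratic or higher in derivatives, meets the structural hypothesis stated after (\ref{eq1:curvatureflow}).

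Second, I would use area conservation, $\frac{d}{dt}\mathrm{Area} = \int F\,ds = 0$ (since $\int \partial_s^{2p}k\,ds = 0$), together with Theorem 1 of \cite{Wheeler} to conclude that the area-$\pi$ assumption forces $\hat k(0,\infty) = 1$. The smallness hypothesis $\|\psi\|_{2p+3}\le\delta\hat\psi(0)$ combined with enclosed area $\pi$ also keeps $\hat\psi(0)$ within $O(\delta)$ of $1$, a discrepancy that can be absorbed into $\epsilon$. Substituting these values into $P_{n,\epsilon}$ from Theorem \ref{maintheorem} gives
\[
P_{n,\epsilon} = -n^{2p+2}(1-\epsilon)^{2p+2} + n^{2p}(1+\epsilon)^{2p+2},
\]
which at $n=2$ is precisely the $P_\epsilon$ of the statement.

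Finally, I would verify the two spectral hypotheses. At $\epsilon=0$ one has $P_{n,0} = n^{2p}(1-n^{2}) \le -3\cdot 4^{p}$ for every $n\ge 2$, and the continuous derivative $\partial_n P_{n,0} = 2n^{2p-1}\bigl(p-(p+1)n^{2}\bigr)$ is strictly negative for $n\ge 2$, so $n=2$ is the integer maximiser and $P_{n,0}\to -\infty$ as $n\to\infty$. Both inequalities of Theorem \ref{maintheorem}---the uniform bound $P_{n,4\delta}<c<0$ and the monotonicity $P_{2,\epsilon}\ge P_{n,\epsilon}$---then follow from continuity in $\epsilon$ and the asymptotic dominance of $-n^{2p+2}(1-\epsilon)^{2p+2}$ over $n^{2p}(1+\epsilon)^{2p+2}$ for $n$ large. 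The main obstacle I anticipate is the first step: rigorously confirming that in the full expansion the only linear-in-derivatives monomials are $k^{2p+2}\partial_\theta^{2p+2}k$ and $k^{2p+2}\partial_\theta^{2p}k$, since the whole applicability of Theorem \ref{maintheorem} hinges on $G$ meeting its polynomial condition; once this combinatorial point is settled, the rest is bookkeeping.
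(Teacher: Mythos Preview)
Your proposal is correct and follows essentially the same route as the paper: rewrite $\partial_s = k\,\partial_\theta$, observe that the only terms linear in derivatives are $(-1)^p k^{2p+2}\bigl(\partial_\theta^{2p+2}k+\partial_\theta^{2p}k\bigr)$ while every other monomial contains at least two derivative factors, identify $M=2p+2$ and the single nonzero coefficient $a_{2p+2}=(-1)^p$ at $l=p$, invoke \cite{Wheeler} for $\hat k(0,\infty)=1$, and read off $P_{n,\epsilon}$. Your verification of the spectral hypotheses via $\partial_n P_{n,0}<0$ for $n\ge 2$ is slightly more explicit than the paper, which simply remarks that checking the $\delta=0$ case suffices; otherwise the arguments coincide.
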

We must point out that the rates given in Corollary \ref{applicationpolyharmonic} improve on those obtained in \cite{Wheeler, Wheeler2}. 
Also, condition (\ref{initialconditionregularity}) is not sharp (we will discuss this in the last section of this paper).

On the other hand, Theorem \ref{maintheorem} allows us to show stability results for families of flows by
only having to check the form of the function $F$ in (\ref{generalflow}). This will be shown in the
last section of this work.

This paper is organised as follows. In Section 2 we give a proof of our main result; in Section 3 we apply it 
to the polyharmonic flow, give an example of another flow to which our main result applies, and propose families
of flows for which stability (in the sense described above) holds (see Propositions \ref{family1}
and \ref{family2} at the end of the paper); in Section 4 we will give a brief discussion on how
condition (\ref{initialconditionregularity}) might be improved in the case of the polyharmonic flow and 
a flow recently studied in \cite{AndrewsWheeler}.

\section{Proof of Theorem \ref{maintheorem}}

\subsection{Preliminaries}
Applying the Fourier transform to the evolution equation (\ref{eq1:curvatureflow}), we obtain
\[
\frac{\partial\hat{k}}{\partial t} (\xi,t)= (-1)^{p+1}\hat{k}^{*M}(\xi,t)*\left(i\xi\right)^{2p}\hat{k}(\xi,t)+ \hat{G}(\xi,t).
\]
If $G$ has the form
\[
G\left(k, \frac{\partial k}{\partial \theta}, \ldots, \frac{\partial^{2p-1} k}{\partial \theta^{2p-1}}\right) = \sum_{j_l\in\{0,1, \ldots, m\}}a_{j_0j_1 \dots j_{2p-1}}k^{j_0}\left[\frac{\partial k}{\partial \theta}\right]^{j_1} \cdots \left[\frac{\partial^{2p-1} k}{\partial \theta^{2p-1}}\right]^{j_{2p-1}},
\]
then $\hat{G}$ is given by
\[
\hat{G}(\xi,t) = \sum_{j_l\in\{0,1,\ldots,m\}} a_{j_0j_1 \dots j_{2p-1}}\hat{k}^{*j_0}*\left[i\xi\hat{k}\right]^{*j_1} * \cdots * \left[\left[i\xi\right]^{2p-1}\hat{k}\right]^{*j_{2p-1}}.
\]
The $l$-term in parenthesis in the last sum can be computed as
\[
\left[\left[i\xi\right]^l\hat{k}\right]^{*j_l} = i^{lj_l}\sum_{\sum q_{s,t}=\xi} \left[q_{l,1}q_{l,2} \cdots q_{l,j_l}\right]^l\hat{k}(q_{l,1},t)\hat{k}(q_{l,2},t) \cdots \hat{k}(q_{l,j_l},t).
\]
Then the evolution of $\hat{k}\left(\xi,t\right)$ equation turns out to be
\small
\[
\begin{aligned}
&\partial_t \hat{k} (\xi,t) \\&= -\sum_{q \in \mathbb{Z}} q^{2p}\hat{k}(q,t)\hat{k}^{*M}(\xi - q,t) \\ &\quad + \sum_{q_{l,h} \in \mathbb{Z}} a_{j_0j_1 \dots j_{2p-1}}i^{j_1 + \cdots + (2p-1)j_{2p-1}} \left[q_{1,1}q_{1,2} \cdots q_{1,j_1}\right] \cdots \left[q_{2p-1,1}q_{2p-1,2} \cdots q_{2p-1,j_{2p-1}}\right]^{2p-1} \\ & \quad \quad \left[\hat{k}(q_{0,1},t)\hat{k}(q_{0,2},t) \cdots \hat{k}(q_{0,j_0},t)\right] \cdots \left[\hat{k}(q_{2p-1,1},t)\hat{k}(q_{2p-1,2},t) \cdots \hat{k}(q_{2p-1,j_{2p-1}},t)\right]\\
&= -\sum_{q \in \mathbb{Z}} q^{2p}\hat{k}(q,t)\hat{k}^{*M}(\xi - q,t) + \sum a_J i^{J\cdot(0,1,...,2p-1)} Q_1 \cdots Q_{2p-1}^{2p-1} \hat{k}(Q_0,t) \cdots \hat{k}(Q_{2p-1},t)\\
&= -\sum_{q \in \mathbb{Z}} q^{2p}\hat{k}(q,t)\hat{k}^{*M}(\xi - q,t) + \sum a_J i^{J \cdot R} Q^R \hat{k}(Q,t),
\end{aligned}
\]
\normalsize
here $J=\left(j_0,j_1,\ldots,j_{2p-1}\right)$, $R=(0,1, \ldots, 2p-1)$, $Q=\left(Q_0,Q_1,\ldots,Q_{2p-1}\right)$ and $Q_l = \left(q_{l,0},q_{l,1},\ldots,q_{l,j_l}\right)$, and we adopt the notation
\[
\hat{k}(Q,t)=\hat{k}(Q_0,t) \cdots \hat{k}(Q_{2p-1},t).
\]
\\
%Note that if $\xi =0$ the evolution equation is now the following
%\[
%\partial_t \hat{k} (0,t) = -\sum_{q \in \mathbb{Z}} q^{2p}\hat{k}(q,t)\hat{k}^{2p}(-q,t)
%\]
Let $\mathcal{Z}$ be a finite subset of integers that contains 0, and such that if $n \in \mathcal{Z}$ then $-n \in \mathcal{Z}$. Also, we will consider the following sets:
\[
B_n=\left\{Q = \left(q_{1,1},\ldots,q_{1,j_1}, \ldots, q_{2p-1,1},\ldots,q_{2p-1,j_{2p-1}}\right):\, \sum_{l,k} q_{l,k} = n\right\}
\]
\[
A_n =\bigg\{Q \in B_n:\,\text{Q has at least two nonzero components}\bigg\}
\]
Using these sets we consider the following finite dimensional approximation to the equation considered above for
$\hat{k}\left(n,t\right)$
\[
\begin{aligned}
\partial_t \hat{k} (n,t) &= -n^{2p}\hat{k}^{M}(0,t)\hat{k}\left(n,t\right) + 
\sum_{l=1}^{p-1}\sum_{j_l}\left(-1\right)^ln^{2l}a_{j_l}\hat{k}^{j_l}\left(0,t\right)\hat{k}\left(n,t\right)
\\& \quad - \sum_{q \in \mathcal{Z}-\{n\}} q^{2p}\hat{k}(q,t)\hat{k}^{*M}(n - q,t) + \sum_{A_n \cap \mathcal{Z}^{2p-1}} a_J i^{J \cdot R} Q^R \hat{k}(Q,t),
\end{aligned}
\]
for $n\neq 0$, and 
\[
\partial_t\hat{k}(0,t) = - \sum_{q \in \mathcal{Z}-\{n\}} q^{2p}\hat{k}(q,t)\hat{k}^{*M}(-q,t) + \sum_{A_0 \cap \mathcal{Z}^{2p-1}} a_J i^{J \cdot R} Q^R \hat{k}(Q,t),
\]
for $n=0$. The proofs given below are for these finite dimensional approximations. However, our estimates
are strong enough to pass to the limit (see \cite{cortissoz13}).

\subsection{Important Remark} We shall be using the following fact freely.
The sums (here $l\geq 1$) satisfy
\[
\sum^* \frac{1}{q_1^2}\frac{1}{q_2^2}\dots\frac{1}{q_l^2}\frac{1}{\left(n-q_l-q_{l-1}-\dots q_1\right)^2}
=O\left(\frac{1}{n^2}\right),
\]
and 
\[
\sum^* \frac{1}{q_1}\frac{1}{q_2^{2}}\dots\frac{1}{q_l^2}\frac{1}{\left(n-q_l-q_{l-1}-\dots q_1\right)^2}
=O\left(\frac{1}{n}\right),
\]
where $*$ means that the sum is over all the $q_1,q_2,\dots, q_k$ such that
$\left|q_j\right|\geq 1$ and $\left|n-q_k-\dots-q_1\right|\geq 1$ (this
can be proved by induction, see Lemma 3.1 in \cite{cortissoz13}).

\subsection{The Proof}
%If we assume $\delta\hat{\psi}\left(0\right)\geq \sup_{n\neq 0}n^{2p}\left|\hat{\psi}\left(n\right)\right|$, we can see that
%\[
%\left|\left(q_{l,1}q_{l,2} \cdots q_{l,j_l}\right)^l\hat{k}(q_{l,1},t)\hat{k}(q_{l,2},t) \cdots \hat{k}(q_{l,j_l},t)\right| \leq \delta^{j_l} \hat{k}^{j_l}(0,t)
%\]
We begin by a series of lemmas whose purpose is to show how the Fourier wavenumbers
decay exponentially in time. The first two lemmas show that the set of wavenumbers $n=\pm 1$ and $\left|n\right|\geq 2$
control each other: if the size of the Fourier coefficients with wavenumber in one set is small, so are
the Fourier coefficients with wavenumbers in the other set. Once this is proved, the Trapping Lemma 
(Lemma \ref{trappinglemma}) guarantees that
if we start close enough to $\hat{\psi}\left(0\right)$ (the
average of the initial data), the Fourier coefficients of the solution decay exponentially
in time, as long
as the 0-th wave number remains large enough: that the 0-th Fourier coefficient remains large enough (and does not blow-up)
is the purpose of Lemma \ref{exponentialconvergencetoconstant}.

We want to point out that more than going carefully over the calculations needed in our proofs, we want to 
convey to the reader the spirit of our arguments and of their applicability. More refined calculations can
be found on \cite{cortissoz13, cortissoz17}, where the ideas to be found here have been applied to
other flows.
 
From now on, all the hypothesis of Theorem \ref{maintheorem} on the structure of (\ref{eq1:curvatureflow}) are assumed. 
\begin{lema}
Given a smooth function $\psi$ (which from now on we assume to be the curvature function of a simple smooth closed convex curve),
there is a $\delta>0$ (which may depend on $\left\|\psi\right\|_{\infty}$) such that if $\psi$ satisfies 
\[
\left\|\psi\right\|_{2p+1}\leq \delta\hat{\psi}\left(0\right),
\]
and if on $\left[0,\tau\right]$, the solution to (\ref{eq1:curvatureflow}) with initial condition $\psi$ satisfies
$\left(1-4\delta\right)\hat{\psi}\left(0\right)\leq\hat{k}\left(0,t\right)\leq \left(1+4\delta\right)\hat{\psi}\left(0\right)$, 
and $\left|\hat{k}\left(\pm 1,t\right)\right| \leq 2\delta\hat{k}\left(0,t\right)$. Then, $n^{2p}\left|\hat{k}\left(n,t\right)\right|<\delta \hat{k}\left(0,t\right)$
on the same time interval.
\end{lema}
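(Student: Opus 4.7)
The plan is a continuity/bootstrap argument on $[0,\tau]$. At $t=0$, the hypothesis $\|\psi\|_{2p+1}\leq \delta\hat{\psi}(0)$ combined with the definition of the seminorm gives $|n|^{2p+1}|\hat{\psi}(n)|\leq 2\delta\hat{\psi}(0)$, so for $|n|\geq 2$ one has $|n|^{2p}|\hat{\psi}(n)|\leq 2\delta\hat{\psi}(0)/|n|\leq \delta\hat{\psi}(0)$, with strict inequality for $|n|\geq 3$ (the borderline case $|n|=2$ is handled by tightening the initial seminorm bound by a fixed factor, harmless since $\delta$ is at our disposal). Suppose the claim fails and let $t^{*}\in (0,\tau]$ be the first time at which $(n^{*})^{2p}|\hat{k}(n^{*},t^{*})|=\delta\hat{k}(0,t^{*})$ for some $|n^{*}|\geq 2$. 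It suffices to show that at $t=t^{*}$ the time derivative of $(n^{*})^{2p}|\hat{k}(n^{*},t)|-\delta\hat{k}(0,t)$ is strictly negative, contradicting the fact that this quantity approaches zero from below.

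To do this, I would split the evolution of $\hat{k}(n^{*},t)$ into diagonal and off-diagonal pieces by writing $\partial_{t}\hat{k}(n^{*},t)=\lambda^{\mathrm{eff}}_{n^{*}}(t)\hat{k}(n^{*},t)+E_{n^{*}}(t)$, where
\[
\lambda^{\mathrm{eff}}_{n^{*}}(t)=-(n^{*})^{2p}\hat{k}(0,t)^{M}+\sum_{l=1}^{p-1}\sum_{j_{l}}(-1)^{l}(n^{*})^{2l}a_{j_{l}}\hat{k}(0,t)^{j_{l}}
\]
and $E_{n^{*}}$ collects the remaining convolution and polynomial terms. Under the running hypothesis $(1-4\delta)\hat{\psi}(0)\leq \hat{k}(0,t)\leq (1+4\delta)\hat{\psi}(0)$, maximizing each summand in the direction dictated by the sign of $(-1)^{l}a_{j_{l}}$ yields $\lambda^{\mathrm{eff}}_{n^{*}}(t)\leq P_{n^{*},4\delta}<c<0$. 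Since $\lambda^{\mathrm{eff}}_{n^{*}}$ is real, $\partial_{t}|\hat{k}(n^{*},t)|\leq \lambda^{\mathrm{eff}}_{n^{*}}(t)|\hat{k}(n^{*},t)|+|E_{n^{*}}(t)|$, so at $t^{*}$ the diagonal contribution to $(n^{*})^{2p}\partial_{t}|\hat{k}(n^{*},t)|$ is at most $c\,\delta\hat{k}(0,t^{*})$.

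The hard part is showing that $(n^{*})^{2p}|E_{n^{*}}(t^{*})|$ and $\delta|\partial_{t}\hat{k}(0,t^{*})|$ are of order $\delta^{2}\hat{k}(0,t^{*})$, so that the strictly negative diagonal contribution dominates them once $\delta$ is small. Two structural observations drive this. First, in the convolution sum $-\sum_{q\in\mathcal{Z}-\{n^{*}\}}q^{2p}\hat{k}(q,t)\hat{k}^{*M}(n^{*}-q,t)$ the factor $q^{2p}$ kills the contributions with $q=0$, so every surviving monomial contains at least two Fourier coefficients at nonzero wavenumbers. Second, every term in the polynomial sum over $A_{n^{*}}\cap \mathcal{Z}^{2p-1}$ has by definition at least two nonzero components, and the structural condition on $G$ (either $\alpha_{1}+\cdots+\alpha_{2p-1}>1$, or the lone derivative factor is of even order so that the corresponding $n^{2l}$ weight supplies the required decay) guarantees sufficient polynomial weight in $n^{*}$. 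Applying the trapping hypothesis $|\hat{k}(q,t)|\leq \delta\hat{k}(0,t)/|q|^{2p}$ for $|q|\geq 2$ and $|\hat{k}(\pm 1,t)|\leq 2\delta\hat{k}(0,t)$ to every such factor, together with the sum estimates of the Important Remark, each term picks up at least a factor of $\delta^{2}$ and enough inverse powers of $n^{*}$ to absorb the outer $(n^{*})^{2p}$; this gives $(n^{*})^{2p}|E_{n^{*}}(t^{*})|\leq C\delta^{2}\hat{k}(0,t^{*})$. A parallel computation bounds $|\partial_{t}\hat{k}(0,t^{*})|$ by $C\delta^{2}\hat{k}(0,t^{*})$, using that $\hat{k}(0,t)$ has no linear-in-itself term in its evolution (both $n^{2p}$ and the factors $n^{2l}$ vanish at $n=0$).

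Combining these bounds yields
\[
\frac{d}{dt}\Big[(n^{*})^{2p}|\hat{k}(n^{*},t)|-\delta\hat{k}(0,t)\Big]\bigg|_{t=t^{*}}\leq \big(c+2C\delta\big)\delta\hat{k}(0,t^{*}),
\]
which is strictly negative once $\delta$ is chosen small enough that $2C\delta<|c|$, producing the desired contradiction. The principal technical obstacle throughout is the combinatorial bookkeeping that turns the structural hypothesis on $G$ and the vanishing of $q^{2p}$ at $q=0$ into the claim that every surviving term in $E_{n^{*}}$ is at least quadratic in the small quantities $\delta\hat{k}(0,t)/|q|^{2p}$; once that is in place, the sum estimates of the Important Remark close the argument.
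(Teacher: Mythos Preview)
Your overall strategy coincides with the paper's: a continuity argument based on the first time $t^{*}$ at which $(n^{*})^{2p}|\hat{k}(n^{*},t^{*})|=\delta\hat{k}(0,t^{*})$ for some $|n^{*}|\ge 2$, a split of $\partial_{t}\hat{k}(n^{*},t)$ into a diagonal piece $\lambda^{\mathrm{eff}}_{n^{*}}\hat{k}(n^{*},t)$ and a remainder $E_{n^{*}}$, and the structural observation that every surviving term of $E_{n^{*}}$ carries at least two Fourier factors at nonzero wavenumber and hence is at least quadratic in $\delta$. Your additional step of bounding $\partial_{t}\hat{k}(0,t)$ to track the moving threshold $\delta\hat{k}(0,t)$ is a refinement the paper's write-up omits.

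There is, however, a quantitative slip in how you balance the two pieces. With only the bootstrap decay $|\hat{k}(q,t)|\le \delta\hat{k}(0,t)/|q|^{2p}$ available on $[0,t^{*}]$, the remainder $E_{n^{*}}$ is $O(\delta^{2})$ but \emph{not} $O(\delta^{2}/(n^{*})^{2p})$: in $\sum_{q\neq n^{*}}q^{2p}\hat{k}(q)\hat{k}^{*M}(n^{*}-q)$ the weight $q^{2p}$ exactly cancels the available decay of $\hat{k}(q)$, leaving only the single $|q_{j}|^{-2p}$ from the second nonzero factor, and the resulting sum has no gain in $n^{*}$ (concretely, the summand $q=n^{*}-1$ with the convolution concentrated at a single $\pm 1$ already contributes a term of size $\sim\delta^{2}$). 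So your claimed bound $(n^{*})^{2p}|E_{n^{*}}|\le C\delta^{2}$ is too strong, and paired with the coarse diagonal estimate $\lambda^{\mathrm{eff}}_{n^{*}}\le c$ the comparison breaks down for large $n^{*}$. The repair, which is precisely what the paper invokes, is to retain the full strength $\lambda^{\mathrm{eff}}_{n^{*}}\le -c_{0}(n^{*})^{2p}$ for large $n^{*}$ (the paper writes ``for $n$ large $P_{n,4\delta}\sim n^{2p}$''); then the diagonal contribution to $(n^{*})^{2p}\partial_{t}|\hat{k}(n^{*},t)|$ is of order $-(n^{*})^{2p}\delta$ and dominates $(n^{*})^{2p}|E_{n^{*}}|=O((n^{*})^{2p}\delta^{2})$ uniformly once $\delta$ is small. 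Equivalently, and closer to the paper's phrasing, compare $|E_{n^{*}}|=O(\delta^{2})$ directly against $|P_{n^{*}}\hat{k}(n^{*},t^{*})|\gtrsim\delta$ before multiplying through by $(n^{*})^{2p}$.
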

\begin{proof}
Assume that 
$n^{2p}\left|\hat{k}\left(n,\tau'\right)\right|=\delta \hat{k}\left(0,\tau'\right)$ for a given $n$ occurs for a first time at time $\tau'>0$
with $\tau'<\tau$.
Let us consider the real and imaginary parts of the evolution equation
\[
\begin{aligned}
\frac{d}{dt}\hat{k}(n,t) &= P\hat{k}\left(n,t\right)\\
& \quad- \sum_{q \in \mathcal{Z}-\{n\}} q^{2p}\hat{k}(q,t)\hat{k}^{*M}(n - q,t) + \sum_{A_n \cap \mathcal{Z}^{2p-1}} a_J i^{J \cdot R} Q^R \hat{k}(Q,t).
\end{aligned}
 \]
 where
\[
\begin{aligned}
P_n&=-n^{2p}\hat{k}^{M}(0,t) + 
\sum_{l=1}^{p-1}\sum_{j_l}\left(-1\right)^ln^{2l}a_{j_l}\hat{k}^{j_l}\left(0,t\right).\\
\end{aligned}
\]
Do keep in mind that $P_n$ is real valued.
Now we examine the terms different from $P_n\hat{k}(n,t)$ in the last equation. We begin with
\[
\left|-\sum_{q \in \mathcal{Z}-\{n\}} q^{2p}\hat{k}(q,t)\hat{k}^{*M}(n - q,t) \right|. 
\]
The condition $q\neq n$ in the sum implies that in the convolution term 
$\hat{k}^{*M}(n - q,t)$
there is at least one term $\hat{k}\left(q_j,t\right)$
with $q_j\neq 0$
(if all the terms in this convolution term are $\hat{k}\left(0,t\right)$, then necessarily $q=n$). 
Hence, inside the sum there are two terms of order $\delta$. The fact that 
$\left|q\right|^{2p+1}\left|\hat{k}\left(q,t\right)\right|<\delta\hat{k}\left(0,t\right)$ 
and $\hat{k}\left(0,t\right)\leq \left(1+4\delta\right)\hat{\psi}\left(0\right)$ for $t\in\left[0,\tau'\right]$ imply that the sum 
can be bounded independently of $\mathcal{Z}$
(recall the 'important remark'). Hence this term is of order $O\left(\delta^2/n\right)$ and that the implicit constants
in this bound may depend on powers of $\hat{\psi}\left(0\right)$.

Now we look at the term
\[
\biggl|\sum a_J i^{J \cdot R} Q^R \hat{k}(Q,t)\biggr|.
\]
In this case, by the definition of the set $A_n \cap \mathcal{Z}^{2p-1}$, in the convolution $\hat{k}(Q,t)$ there 
are at least two terms $\hat{k}\left(q_1,t\right)$ and $\hat{k}\left(q_2,t\right)$ with
$q_1,q_2\neq 0$. Also,
the fact that in the expression $Q^R$ the powers of the individual terms are less than or equal to $2p-1$ 
together with that on the interval $\left[0,\tau'\right]$, $\left|q\right|^{2p+1}\left|\hat{k}\left(q,t\right)\right|\leq 
2\delta \hat{k}\left(0,t\right)$ and $\hat{k}\left(0,t\right)\leq \left(1+4\delta\right)\hat{\psi}\left(0\right)$,
imply that the sums can be bounded independently of $\mathcal{Z}$, and hence, by our previous observation,
this term is $O\left(\delta^2/n\right)$ and the implicit constant may depend on powers of $\hat{\psi}\left(0\right)$.

By the hypothesis of Theorem \ref{maintheorem} (more precisely
that referring to $P_{n,4\delta}$), which are in place here, it is not difficult to see that there 
is a $c<0$ such that for all $n$, $P_n\leq c<0$, and for $n$ 
large $P_{n,4\delta}\sim n^{2p}$
and that $\hat{k}\left(n,t\right)$ is of order $\delta/n^{2p+1}$,
at time $\tau'$.
Hence, by taking $\delta > 0$ small enough, the real and imaginary parts of $\dfrac{d}{dt}\hat{k}(n,\tau')$ are dominated by the 
term $P_n\hat{k}(n,\tau')$, which 
is of order $\delta/n$, and this implies that the absolute values of the real and imaginary parts of $\hat{k}(n,t)$ are non increasing
at $\tau'$. This implies the lemma.

\end{proof}
Using the equation $U_{\pm}\left(k\left(\cdot,t\right)\right)=0$, we can show how to control 
the $\pm 1$ wavenumbers. That is the content of the next lemma.
\begin{lema}
Given a function $\psi$,
there is a $\delta>0$ (which may depend on $\left\|\psi\right\|_{\infty}$) such that if 
\[
\left\|\psi\right\|_{2p+1}\leq \delta\hat{\psi}\left(0\right),
\]
and on $\left[0 ,\tau\right]$, 
a solution to (\ref{eq1:curvatureflow}) with initial data $\psi$ satisfies
$\left(1-4\delta\right)\hat{\psi}\left(0\right)\leq\hat{k}\left(0,t\right)\leq \left(1+4\delta\right)\hat{\psi}\left(0\right)$, 
and $n^{2p}\left|\hat{k}\left(n,t\right)\right|<\delta \hat{k}\left(0,t\right)$. Then, 
$\left|\hat{k}\left(\pm 1,t\right)\right|<\delta \hat{k}\left(0,t\right)$ on the same time interval.
\end{lema}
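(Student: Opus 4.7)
The plan is to exploit the geometric constraint $U_{\pm}(k(\cdot,t))=0$, which says $\widehat{1/k}(\pm 1,t)=0$ at every time, and convert it into an algebraic identity that forces $\hat{k}(\pm 1,t)$ to be quadratically small in $\delta$. Write $k(\theta,t)=\hat{k}(0,t)+\tilde{k}(\theta,t)$ with $\tilde{k}$ of zero mean. The hypothesis $|n|^{2p}|\hat{k}(n,t)|<\delta\hat{k}(0,t)$ (read as applying to the higher wavenumbers $|n|\geq 2$), combined with the a priori bound $|\hat{k}(\pm 1,t)|\leq\delta\hat{k}(0,t)$ that will be verified by the continuity argument below, yields $\|\tilde{k}(\cdot,t)\|_{\infty}\leq C\delta\hat{k}(0,t)<\hat{k}(0,t)$ for $\delta$ small enough. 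Consequently the geometric series
\[
\frac{1}{k(\theta,t)}=\frac{1}{\hat{k}(0,t)}\sum_{j=0}^{\infty}\left(-\frac{\tilde{k}(\theta,t)}{\hat{k}(0,t)}\right)^{j}
\]
converges uniformly on $\mathbb{S}^{1}$.

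Taking the $\pm 1$ Fourier coefficient of both sides, the $j=0$ constant contributes nothing, the $j=1$ term contributes $-\hat{k}(\pm 1,t)/\hat{k}(0,t)^{2}$, and imposing $\widehat{1/k}(\pm 1,t)=0$ gives, after rearranging, the key identity
\[
\hat{k}(\pm 1,t)=\sum_{j=2}^{\infty}\frac{(-1)^{j}}{\hat{k}(0,t)^{\,j-1}}\,\widehat{\tilde{k}^{\,j}}(\pm 1,t),
\]
where $\widehat{\tilde{k}^{\,j}}(\pm 1,t)=\sum \hat{k}(n_{1},t)\cdots\hat{k}(n_{j},t)$ with the sum taken over all tuples satisfying $n_{1}+\cdots+n_{j}=\pm 1$ and every $n_{i}\neq 0$. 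The essential structural point is that every product on the right has at least two genuinely nonzero wavenumbers.

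I would then close with a first-time argument mirroring the previous lemma: suppose $t^{*}\in(0,\tau]$ is the first time at which $|\hat{k}(\pm 1,t^{*})|=\delta\hat{k}(0,t^{*})$, so that for all $t\leq t^{*}$ and every $n\neq 0$ one has $|\hat{k}(n,t)|\leq \delta\hat{k}(0,t)/\max(1,|n|^{2p})$. Substituting these bounds into the multilinear sum and invoking the \emph{Important Remark} on sums of products of reciprocal squares (valid since $2p\geq 2$), one obtains
\[
|\widehat{\tilde{k}^{\,j}}(\pm 1,t^{*})|\leq C_{j}\,\delta^{j}\,\hat{k}(0,t^{*})^{j}.
\]
Summing the series in the key identity yields $|\hat{k}(\pm 1,t^{*})|\leq C\delta^{2}\hat{k}(0,t^{*})$, with $C$ depending only on $p$ and $\|\psi\|_{\infty}$; choosing $\delta$ so small that $C\delta<1$ then contradicts the defining equality at $t^{*}$.

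The main obstacle I anticipate is not conceptual but combinatorial: one must estimate the $j$-fold convolution $\widehat{\tilde{k}^{\,j}}(\pm 1)$ uniformly in $j$ so that the constants $C_{j}$ grow slowly enough to ensure $\sum_{j\geq 2}C_{j}\delta^{j}=O(\delta^{2})$, and verify that the implicit dependence on $\|\psi\|_{\infty}$ does not destroy the smallness coming from $\delta^{2}$. Both points rely precisely on the $|n|^{-2p}$ decay afforded by the regularity assumption $\|\psi\|_{2p+1}\leq \delta\hat{\psi}(0)$, which is what allows the ``at least two nonzero wavenumbers'' feature of $\widehat{\tilde{k}^{\,j}}(\pm 1)$ to be cashed in as a genuine gain of a factor of $\delta$.
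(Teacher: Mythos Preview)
Your proposal is correct and follows essentially the same approach as the paper: both exploit the constraint $U_{\pm}(k)=0$ by expanding $1/k$ around $\hat{k}(0,t)$ (the paper calls it a Taylor expansion, you write it as a geometric series), isolate the linear contribution $-\hat{k}(\pm 1,t)/\hat{k}(0,t)^{2}$, and conclude that $\hat{k}(\pm 1,t)$ equals a sum of terms each containing at least two nonzero wavenumbers, hence is $O(\delta^{2}\hat{k}(0,t))$. Your explicit continuity/first-time argument and the verification that the constants $C_{j}$ grow only geometrically (so that $\sum_{j\geq 2}C_{j}\delta^{j}=O(\delta^{2})$, e.g.\ via $|\widehat{\tilde{k}^{\,j}}(\pm 1)|\leq \|\tilde{k}\|_{\infty}^{j}\leq (C\delta\hat{k}(0))^{j}$) simply make precise what the paper leaves as a sketch referring to \cite{cortissoz17}.
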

\begin{proof}
The proof is almost identical to the proof of Lemma 3 given in \cite{cortissoz17}, but the idea behind the
proof is quite simple:

Indeed, as we said before, our starting point are the identities $U_{\pm}\left(k\right)=0$. Let us work with
$U_+\left(k\right)=0$. Then, by hypothesis, we can write
\[
k=\hat{k}\left(0,t\right)
\left(1+A\left(t\right)e^{-i\theta}+A^*\left(t\right)e^{i\theta}+O\left(\delta\right)\right),
\]
and here $A^*$ denotes the conjugate of $A$. We may assume, again by the hypotheses,
that $A$ is small in the interval $\left[0,\tau\right]$
(or in a given subinterval), so that
using a Taylor expansion, we can compute that
\[
\int_0^{2\pi}\frac{e^{i\theta}}{k\left(\theta,t\right)}\,d\theta=\frac{A\left(t\right)}{\hat{k}\left(0,t\right)}
+O\left(\delta\right).
\]
But the integral is 0 by hypothesis, and from this we can conclude that
\[
A\left(t\right)=O\left(\delta \hat{k}\left(0,t\right)\right).
\]

\end{proof}
Then we have the following version of the Trapping Lemma
on $\left[0,\tau\right]$:
\begin{lema}[Trapping Lemma]
\label{trappinglemma}
Given a function $\psi$,
there is a $\delta>0$ (which may depend on $\left\|\psi\right\|_{\infty}$),
such that if the initial datum $\psi$ satisfies the following inequality
\[
\left\|\psi\right\|_{2p+1}\leq \delta\hat{\psi}(0),
\]
and such that if for all $t\in \left[0,\tau\right]$
$$\left(1-4\delta\right)\hat{\psi}\left(0\right)\leq\hat{k}\left(0,t\right)\leq \left(1+4\delta\right)\hat{\psi}\left(0\right),$$ 
and
$\left|n\right|^{2p+1}\left|\hat{k}\left(n,t\right)\right|\leq \delta \hat{k}\left(0,t\right)$,
then there exists a $\gamma>0$ that depends on $\delta>0$, and a constant $C>0$ such that the solution to (\ref{eq1:curvatureflow}) satisfies
on $\left[0,\tau\right]$:
\[
\left|\hat{k}\left(n,t\right)\right| \leq \frac{C\delta\hat{\psi}(0)e^{-\gamma \left|n\right|t}}{n^{2p+1}}.
\]
\end{lema}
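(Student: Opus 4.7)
The strategy is a barrier/bootstrap argument against
\[
B_n(t) := \frac{C\delta\hat{\psi}(0)\,e^{-\gamma|n|t}}{|n|^{2p+1}}, \qquad n\neq 0,
\]
where $C>1$ and $\gamma>0$ are to be fixed. The initial hypothesis $\|\psi\|_{2p+1}\leq\delta\hat{\psi}(0)$ gives $|\operatorname{Re}\hat{\psi}(n)|,|\operatorname{Im}\hat{\psi}(n)|<B_n(0)$, so the barrier is strictly slack at $t=0$. Suppose, for contradiction, that a component of $\hat{k}$ first touches the barrier at $t_0\in(0,\tau]$ at some wavenumber $n_0$. Since $P_n$ is real, the real and imaginary parts of $\hat{k}(n,\cdot)$ decouple, so we may assume (WLOG) that $\operatorname{Re}\hat{k}(n_0,t_0)=B_{n_0}(t_0)$, which forces
\[
(P_{n_0}(t_0)+\gamma|n_0|)B_{n_0}(t_0)+\operatorname{Re}\mathrm{NL}(n_0,t_0)\geq 0,
\]
where $\mathrm{NL}$ collects the nonlinear convolution terms appearing in the finite-dimensional evolution equation.

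\emph{Case $|n_0|\geq 2$.} The hypothesis on $P_{n,4\delta}$ from Theorem \ref{maintheorem}, combined with $(1-4\delta)\hat{\psi}(0)\leq\hat{k}(0,t_0)\leq(1+4\delta)\hat{\psi}(0)$ and the sign-matching built into the definition of $P_{n,4\delta}$, gives $P_{n_0}(t_0)\leq P_{n_0,4\delta}\leq c<0$, with $P_{n_0}(t_0)\sim -n_0^{2p}$ as $|n_0|\to\infty$. Choosing $\gamma\leq|c|/4$ ensures $P_{n_0}(t_0)+\gamma|n_0|\leq c/2$ for all $|n_0|\geq 2$. Each surviving summand of $\mathrm{NL}$ contains at least two nonzero Fourier factors $\hat{k}(q_i,t_0)$ (thanks to the structural conditions on $G$ and the exclusion $q\neq n_0$), each bounded by $B_{q_i}(t_0)$ through the bootstrap, with remaining factors $\hat{k}(0,t_0)\leq(1+4\delta)\hat{\psi}(0)$. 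By the triangle inequality $\sum|q_i|\geq|n_0|$, the product of exponentials satisfies $\prod e^{-\gamma|q_i|t_0}\leq e^{-\gamma|n_0|t_0}$, and the residual pure-power tail sums are $O(|n_0|^{-1})$ by the \emph{Important Remark}. Consequently
\[
|\operatorname{Re}\mathrm{NL}(n_0,t_0)|\leq KC^2\delta^2\hat{\psi}(0)^{M+1}\frac{e^{-\gamma|n_0|t_0}}{|n_0|},
\]
which has the same $n_0$- and $t_0$-dependence as the linear-damping magnitude $|P_{n_0}(t_0)+\gamma|n_0||\,B_{n_0}(t_0)\sim C\delta\hat{\psi}(0)e^{-\gamma|n_0|t_0}/|n_0|$. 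Choosing $\delta$ so small that $KC\delta\hat{\psi}(0)^M<|c|/2$ forces the linear term to strictly dominate, contradicting the touching inequality.

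\emph{Case $|n_0|=1$.} Since $P_{\pm 1}\equiv 0$, the linear argument fails and we must use the identities $U_\pm(k(\cdot,t))=0$, as in the previous lemma. Writing $r=k/\hat{k}(0,t)-1$ (small by hypothesis) and expanding $1/k=\hat{k}(0,t)^{-1}\sum_{j\geq 0}(-1)^j r^j$, the $e^{\mp i\theta}$-coefficient of $U_\pm(k)=0$ gives
\[
\hat{k}(\pm 1,t)=\hat{k}(0,t)\sum_{j\geq 2}(-1)^{j+1}\widehat{r^j}(\mp 1),
\]
expressing $\hat{k}(\pm 1,t)$ as a sum of convolutions of at least two Fourier coefficients $\hat{k}(q_i,t)/\hat{k}(0,t)$ with $q_i\neq 0$ and $\sum q_i=\pm 1$. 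Applying the barrier bound on each $\hat{k}(q_i,t_0)$ (including the $\pm 1$-modes themselves as a bootstrap), together with $\sum|q_i|\geq 1$, yields
\[
|\hat{k}(\pm 1,t_0)|\leq K'C^2\delta^2\hat{\psi}(0)\,e^{-\gamma t_0}=K'C\delta\,B_{\pm 1}(t_0),
\]
strictly below $B_{\pm 1}(t_0)$ for $\delta$ small. So the barrier cannot be first touched at $|n_0|=1$ either, and the bootstrap closes on $[0,\tau]$.

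The principal obstacle is the case $n=\pm 1$, where the vanishing of $P_{\pm 1}$ removes the linear damping and exponential decay must be imported from the geometric identities $U_\pm(k)=0$. A secondary but delicate point is the uniform-in-$n$ bookkeeping of the convolution tails: one must verify, via the \emph{Important Remark}, that after pulling out $e^{-\gamma|n_0|t_0}$ the residual pure-power sums scale as $|n_0|^{-1}$, exactly matching the large-$|n_0|$ behavior of the linear damping $n_0^{2p}B_{n_0}(t_0)$, so that smallness of $\delta$ (rather than of $|n_0|^{-1}$) is what delivers the contradiction.
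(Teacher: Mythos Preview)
Your proof is correct and follows essentially the same route as the paper: the paper makes the substitution $\hat{v}(n,t)=e^{\gamma|n|t}\hat{k}(n,t)$ and shows that $\hat{v}$ cannot exit the region $\{\|w\|_{2p+1}\le 2\delta\hat{\psi}(0)\}$, which is exactly your barrier argument with $C=2$; in both cases the linear term $P_n\hat{k}(n,t)$ (of order $\delta/|n|$) dominates the convolution terms (of order $\delta^2/|n|$) for $|n|\ge 2$, and the modes $n=\pm 1$ are handled algebraically via $U_\pm(k)=0$. One minor quantitative slip: the specific choice $\gamma\le|c|/4$ does not by itself yield $P_{n_0}(t_0)+\gamma|n_0|\le c/2$ uniformly in $n_0$ (take $|n_0|$ large with only the bound $P_{n_0}\le c$); you must also invoke the asymptotic $P_{n}\sim -n^{2p}$ you already noted, just as the paper does when it writes ``for a well chosen $\gamma$''.
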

\begin{proof}
Let be $\hat{v}(\xi,t)=\exp \left( \gamma|\xi|t\right)\hat{k}(\xi,t)$, then:
\[
\begin{aligned}
\hat{v}_t(\xi,t) &= \gamma|\xi|\exp\left(\gamma |\xi| t\right)\hat{k}(\xi,t) + \exp \left(\gamma|\xi|t\right)\hat{k}_t(\xi,t) \\
&=\gamma|\xi|\hat{v}(\xi,t) \\ & \quad + \exp \left( \gamma|\xi|t\right)\left[-\sum q^{2p}\hat{k}(q,t)\hat{k}^{*M}(\xi - q,t) + \sum a_J i^{J \cdot R} Q^R \hat{k}(Q,t)\right]\\
&= \gamma|\xi|\hat{v}(\xi,t) - \frac{\exp \left[\gamma|\xi|t\right)}{\exp \left(\gamma(|\xi -q|+|q|)t\right)}\sum q^{2p}\hat{v}(q,t)\hat{v}^{*M}(\xi - q,t)\\
&\quad + \frac{\exp \left(\gamma|\xi|t\right)}{\exp \left(\gamma|Q|t\right)}\sum a_J i^{J \cdot R} Q^R \hat{v}(Q,t),
\end{aligned}
\]
where $|Q|= \sum_{k,l} |q_{k,l}|$.

For the finite dimensional approximation that we are considering, namely
\[
\begin{aligned}
\frac{d}{dt}\hat{k}(n,t) &=-n^{2p}\hat{k}^{M}(0,t)\hat{k}\left(n,t\right) + 
\sum_{l=1}^{p-1}\sum_{j_l}\left(-1\right)^ln^{2l}a_{j_l}\hat{k}^{j_l}\left(0,t\right)\hat{k}\left(n,t\right)\\
& \quad- \sum_{q \in \mathcal{Z}-\{n\}} q^{2p}\hat{k}(q,t)\hat{k}^{*M}(n - q,t) + \sum_{A_n \cap \mathcal{Z}^{2p-1}} a_J i^{J \cdot R} Q^R \hat{k}(Q,t),
\end{aligned}
\]
the above equation can be rewritten as
\[
\begin{aligned}
\partial_t \hat{v} (n,t) &= \gamma |n| \hat{v}(n,t) \\ &\quad + \exp \left( \gamma |n| t\right)\biggl[-n^{2p}\hat{k}^{M}(0,t)\hat{k}\left(n,t\right) + 
\sum_{l=1}^{p-1}\sum_{j_l}\left(-1\right)^ln^{2l}a_{j_l}\hat{k}^{j_l}\left(0,t\right)\hat{k}\left(n,t\right) \\& \\ &\quad  - \sum_{q \in \mathcal{Z}-\{n\}} q^{2p}\hat{k}(q,t)\hat{k}^{*M}(n - q,t) + \sum_{A_n \cap \mathcal{Z}^{2p-1}} a_J i^{J \cdot R} Q^R \hat{k}(Q,t)\biggr]\\
&= \gamma |n| \hat{v}(n,t) -n^{2p}\hat{v}^{M}\left(0,t\right)\hat{v}\left(n,t\right)+
\sum_{l=1}^{p-1}\sum_{j_l}\left(-1\right)^ln^{2l}a_{j_l}\hat{v}^{j_l}\left(0,t\right)\hat{v}\left(n,t\right)\\ &\quad - \frac{\exp \left(\gamma|n|t\right)}{\exp \left(\gamma[|n -q|+|q|]t\right)}\sum_{q \in \mathcal{Z}-\{n\}} q^{2p}\hat{v}(q,t)\hat{v}^{*M}(n - q,t) \\ &\quad + \frac{\exp \left(\gamma|n|t\right)}{\exp \left(\gamma|Q|t\right)}\sum_{A_n \cap \mathcal{Z}^{2p-1}} a_J i^{J \cdot R} Q^R \hat{v}(Q,t). 
\end{aligned}
\]

Consider the set
\[
\Omega_{\mathcal{Z}}=\left\{w\in\mathbb{C}^{\mathcal{Z}}:\, 2\delta\hat{\psi}\left(0\right)\geq \left\|w\right\|_{2p+1}\right\}.
\]
We shall argue that when $\hat{v}$ hits the boundary of $\Omega_{\mathcal{Z}}$
at time $t$ then $\partial_t\hat{v}$ points towards the interior of $\Omega_{\mathcal{Z}}$. 
To proceed,
let $n$ be a witness of this, that is to say, let
$n\in\mathcal{Z}$ be such that $\left|n\right|^{2p+1}\left|\hat{v}\left(n,t\right)\right|=2\delta\hat{\psi}\left(0\right)$.
First, we shall show that the term
\begin{equation}
\label{rhside}
-n^{2p}\hat{v}^{M}\left(0,t\right)\hat{v}\left(n,t\right)+\gamma\left|n\right|+
\sum_{l=1}^{p-1}\sum_{j_l}\left(-1\right)^ln^{2l}a_{j_l}\hat{v}^{j_l}\left(0,t\right)\hat{v}\left(n,t\right)
\end{equation}
dominates over the other terms in the expression on the left hand side of the equation given

Notice that by the assumption
$$\left(1-4\delta\right)\hat{\psi}\left(0\right)\leq\hat{k}\left(0,t\right)\leq \left(1+4\delta\right)\hat{\psi}\left(0\right),$$
and the assumption on $P_{n,4\delta}$ in Theorem \ref{maintheorem} (recall that $\hat{v}\left(0,t\right)=\hat{k}\left(0,t\right)$), 
for a well chosen $\gamma$,
\[
-n^{2p}\hat{v}^{M}\left(0,t\right)+\gamma\left|n\right|+
\sum_{l=1}^{p-1}\sum_{j_l}\left(-1\right)^ln^{2l}a_{j_l}\hat{v}^{j_l}\left(0,t\right)\leq c < 0,
\]
and $\hat{v}\left(n,t\right)=O\left(\delta/n^{2p+1}\right)$, so this term is of order $\delta/n$ and its real
and imaginary parts have the opposite sign to the real and imaginary parts of $\hat{v}\left(n,t\right)$: 
This fact,
if (\ref{rhside}) were the only term we had to deal with, would imply that $\partial_t\hat{v}$ points
towards the interior of $\Omega_{\mathcal{Z}}$. However, this is not the case and
we must examine other terms and show that they are smaller than (\ref{rhside}) in absolute value when
$\delta$ is small.

Let us start by the term
\begin{equation*}
\left|\frac{\exp \left(\gamma|n|t\right)}{\exp \left(\gamma[|n -q|+|q|]t\right)}\sum_{q \in \mathcal{Z}-\{n\}} q^{2p}\hat{v}(q,t)\hat{v}^{*M}(n - q,t)
\right|
\end{equation*}

As we argued before, condition $q\neq n$ implies that in the convolution term there is at least one term $\hat{k}\left(q_j,t\right)$
with $q_j\neq 0$
(if all the terms in this convolution term are $\hat{v}\left(0,t\right)$, then necessarily $q=n$). 
Hence, inside the sum there are two terms of order $\delta$. The fact that
$\left|q\right|^{2p+1}\left|\hat{k}\left(q,t\right)\right|<\delta\hat{k}\left(0,t\right)$
and $\hat{k}\left(0,t\right)\leq \left(1+4\delta\right)\hat{\psi}\left(0\right)$ imply that the sums can be 
bounded above independently of $\mathcal{Z}$. This shows that the term being considered is of order $\delta^2/n$.

%\[
%\begin{aligned}
%&=\sum_{q \in \mathcal{Z}-\{n\}} \frac{4\delta^2\left(1+4\delta\right)^{2p-2}}{\left|q_1\right|^{2p}\left|q_2\right|^{2p}\cdots 
%\left|q_{M}\right|^{2p}}\\
%&\leq f(\delta)\hat{\psi}\left(0\right)^{2p},
%\end{aligned}
%\]
 On the other hand,
the term 
\[
\left\vert\frac{\exp \left(\gamma|n|t\right)}{\exp \left(\gamma|Q|t\right)}\sum_{A_n \cap \mathcal{Z}^{2p-1}} a_J i^{J \cdot R} Q^R \hat{v}(Q,t)\right\vert
\]
is relatively easy to deal with, because by the definition of $A_n \cap \mathcal{Z}^{2p-1}$ the sum in any of its terms which, as the 
reader knows, are basically products of $\hat{k}\left(q,t\right)$ there are at least two terms for which 
$q\neq 0$, which amounts to the fact that the whole sum is of order $\delta^2/n$.
%adopting the convention
%\[
%\left\|Q\right\|=\left\|Q_1\right\|\dots\quad \left\|Q_i\right\|=|q_{i,1}| \cdots |q_{i,j_i}|
%\]
%\[
%\begin{aligned}
%&\left\vert\frac{\exp \left(\gamma|n|t\right)}{\exp \left(\gamma|Q|t\right)}\sum_{A_n \cap \mathcal{Z}^{2p-1}} a_J i^{J \cdot R} Q^R \hat{v}(Q,t)\right\vert\\
%&\leq \sum_{A_n \cap \mathcal{Z}^{2p-1}} \left|a_J\right|\left\|Q^R\right\|\left|\hat{v}(Q,t)\right|\\
%&\leq \sum_{A_n \cap \mathcal{Z}^{2p-1}} \left|a_J\right|\left\|Q^R\right\|\left|\hat{v}(Q,t)\right|\\
%&\leq \sum_{A_n \cap \mathcal{Z}^{2p-1}} \left|a_J\right|\left\|Q^R\right\| \frac{\delta^2\left(1+4\delta\right)^{2p-2}}{\left|Q\right|^{2p+1}}\\
%&\leq \sum_{A_n \cap \mathcal{Z}^{2p-1}} \left|a_J\right| \frac{\delta^2\left(1+4\delta\right)^{2p-2}}{\left|Q_0\right|^{2p+1}\left|Q_1\right|^{2p} \cdots %\left|Q_{2p-1}\right|^2}\\
%&\leq \sum_{A_n \cap \mathcal{Z}^{2p-1}} \left|a_J\right| \frac{\delta^2\left(1+4\delta\right)^{2p-2}}{\left|Q_0\right|^{2}\left|Q_1\right|^{2} \cdots %\left|Q_{2p-1}\right|^2}=O\left(\delta^2\right)
%\end{aligned}
%\]
%where $|Q_i|=|q_{i,1}| \cdots |q_{i,j_i}|$.
%\\

Then, we can conclude that for $\delta>0$ small enough $\partial_t\hat{v}$ points toward the interior of $\Omega_{\mathcal{Z}}$ 
whenever $\hat{v}$ belongs to its boundary. Hence, $|\hat{v}(n,t)|$ is decreasing and therefore
\[
\left|\hat{k}\left(n,t\right)\right| \leq \frac{ \delta \hat{\psi}(0)e^{-\gamma \left|n\right|t}}{n^{2p+1}}.
\]
The case $n=\pm 1$ is obtained via the identity $U_{\pm}\left(k\right)=0$,
following the same ideas as in Lemma 3 in \cite{cortissoz17}. In fact we have that, without being to explicit about the constants that
\[
\left|\hat{k}\left(\pm 1,t\right)\right|=O\left(\delta e^{-\gamma t}\right).
\]
\end{proof}

Now we show that the average curvature $\hat{k}\left(0,t\right)$ remains controlled.
\begin{lema}
\label{exponentialconvergencetoconstant}
 Given an initial condition $\psi$,
there is a $\delta>0$ (which may depend on $\left\|\psi\right\|_{\infty}$),
such that
 if the solution to (\ref{eq1:curvatureflow}) with initial data $\psi$ satisfies 
on the time interval $\left[0,\tau\right]$ that
$\left(1-4\delta\right)\hat{\psi}\left(0\right)\leq\hat{k}\left(0,t\right)\leq \left(1+4\delta\right)\hat{\psi}\left(0\right)$, $n^{2p}\left|\hat{k}\left(n,t\right)\right|\leq \delta\hat{k}\left(0,t\right)$, and $\left|\hat{k}\left(\pm 1,t\right)\right|<\delta \hat{k}\left(0,t\right)$, then 
$\left(1-3\delta\right)\hat{\psi}\left(0\right)\leq \hat{k}\left(0,t\right)\leq \left(1+3\delta\right)\hat{\psi}\left(0\right)$ on the same time interval.
\end{lema}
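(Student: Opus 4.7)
The plan is to exploit the absence of a linear drift in the evolution of $\hat{k}(0,t)$. Since $P_n$ vanishes at $n=0$, the finite-dimensional approximation reduces to
\[
\partial_t \hat{k}(0,t) = -\sum_{q \in \mathcal{Z}-\{0\}} q^{2p}\hat{k}(q,t)\hat{k}^{*M}(-q,t) + \sum_{A_0 \cap \mathcal{Z}^{2p-1}} a_J i^{J \cdot R} Q^R \hat{k}(Q,t).
\]
My first step is to argue, exactly as in the two preceding lemmas, that every summand on the right is quadratic in the Fourier coefficients at nonzero wavenumbers. In the convolution sum the constraint $q \neq 0$ produces one factor $\hat{k}(q,t)$ with $q \neq 0$, and the constraint $\sum r_j = -q \neq 0$ inside $\hat{k}^{*M}(-q,t)$ forces at least one additional factor at a nonzero wavenumber; in the polynomial sum this is built into the definition of $A_0$.

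Next I would invoke the Trapping Lemma (which is in force under the standing hypotheses of the bootstrap carrying through the proof of the main theorem) to convert the a priori bounds into exponential decay, yielding
\[
|\hat{k}(n,t)| \leq \frac{C\delta \hat{\psi}(0)\, e^{-\gamma|n|t}}{|n|^{2p+1}}, \quad n \neq 0,
\]
and $|\hat{k}(\pm 1,t)| = O(\delta \hat{\psi}(0) e^{-\gamma t})$ from the $U_\pm$ identities. Combining this with $\hat{k}(0,t) \leq (1+4\delta)\hat{\psi}(0)$ and the Important Remark of Section 2.2 to control the resulting sums independently of $\mathcal{Z}$, I expect
\[
|\partial_t \hat{k}(0,t)| \leq C(\hat{\psi}(0))\, \delta^2\, e^{-2\gamma t},
\]
with a constant depending on $\|\psi\|_\infty$ and the coefficients of $G$ but uniform in $\mathcal{Z}$ and in $\tau$.

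Integrating on $[0,t]$ and using $\hat{k}(0,0) = \hat{\psi}(0)$ would then give
\[
|\hat{k}(0,t) - \hat{\psi}(0)| \leq \int_0^t C(\hat{\psi}(0))\, \delta^2\, e^{-2\gamma s}\, ds \leq \frac{C(\hat{\psi}(0))\, \delta^2}{2\gamma},
\]
and since the right-hand side is $o(\delta)$ as $\delta \to 0^+$, for $\delta$ small enough this is bounded by $\delta \hat{\psi}(0)$, yielding the sharper conclusion $(1-\delta)\hat{\psi}(0) \leq \hat{k}(0,t) \leq (1+\delta)\hat{\psi}(0)$, comfortably inside the required $(1\pm 3\delta)$-band. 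The main obstacle is the bookkeeping needed to keep the exponential factor $e^{-2\gamma t}$ intact after summing over $\mathcal{Z}$: in particular one must check that the $n=\pm 1$ contributions, which are not controlled by the $|n|^{-(2p+1)}$ algebraic decay but only by the exponential estimate from the end of the Trapping Lemma, still combine with the other factors so that every summand carries an $e^{-2\gamma t}$ (or at worst $e^{-\gamma t}$) factor, so that the time integral converges uniformly in $\tau$.
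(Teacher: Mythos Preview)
Your approach is essentially the same as the paper's: both write down the evolution of $\hat{k}(0,t)$, observe that every term on the right is at least quadratic in nonzero-mode Fourier coefficients, feed in the exponential decay from the Trapping Lemma to obtain $\partial_t\hat{k}(0,t)=O\left(\delta^2 e^{-\beta\gamma t}\right)$, and then integrate on $[0,t]$. The only cosmetic difference is that the paper packages the bookkeeping step you flag by quoting an exponential-weight convolution estimate (Lemma~3.5 of \cite{cortissoz13}), which yields a rate $e^{-\beta\gamma t}$ for some $\beta>0$ rather than your $e^{-2\gamma t}$; the conclusion for $\delta$ small enough is identical.
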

\begin{proof}
With the same proof of Lemma 3.5 of \cite{cortissoz13}, it is easy to see that
\[
\sum q_1^{j_1}q_2^{j_2} \cdots q_l^{j_l}e^{-\mu|q_1|}e^{-\mu|q_2|} \cdots e^{-\mu|q_l|}e^{-\mu|n- q_l - \cdots q_1|} \leq C_{\mu,l,n}e^{-\frac{\mu}{4^{j_1 + j_2 + \cdots + j_l}}|n|}.
\]
Using the above equation, and because
\[
\begin{aligned}
\frac{d}{dt}\hat{k}(0,t) &= - \sum_{q \in \mathcal{Z}} q^{2p}\hat{k}(q,t)\hat{k}^{*M}(-q,t) + \sum_{A_0 \cap \mathcal{Z}^{2p-1}} a_J i^{J \cdot R} Q^R \hat{k}(Q,t),
\end{aligned}
\]
we get that
\[
\frac{d}{dt}\hat{k}\left(0,t\right)=O\left(\delta^2 \hat{\psi}\left(0\right) e^{-\beta \gamma t}\right),
\]
and integrating this equality for $\delta>0$ small enough, the result follows.
\end{proof}
From the previous lemmas we can conclude the following. 
\begin{prop}
Given an initial condition $\psi$,
there is a $\delta>0$ (which may depend on $\left\|\psi\right\|_{\infty}$),
such that if $\psi$ satisfies the following inequality
\[
\left\|\psi\right\|_{2p+1}\leq \delta\hat{\psi}(0),
\]
then the solution to (\ref{eq1:curvatureflow}) with initial data $\psi$
satisfies the following. There is a $\gamma>0$ and a constant $C>0$ such that
\[
\left|\hat{k}\left(n,t\right)\right|\leq Ce^{-\gamma t},
\]
and $\left(1-4\delta\right)\hat{\psi}\left(0\right)\leq \hat{k}\left(0,t\right)\leq \left(1+4\delta\right)\hat{\psi}\left(0\right)$,
as long as the solution to (\ref{eq1:curvatureflow}) exists.
\end{prop}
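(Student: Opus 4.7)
The proof is a standard continuous-induction (bootstrap) argument that stitches the preceding four lemmas into a closed cycle. I would begin by fixing $\delta>0$ small enough that all four lemmas apply simultaneously, and then define the open set of times
\[
I=\bigl\{\tau\in[0,T_{\max}):\text{ on }[0,\tau]\text{ conditions (a), (b), (c) below hold}\bigr\},
\]
where
(a) $(1-4\delta)\hat{\psi}(0)\leq \hat{k}(0,t)\leq (1+4\delta)\hat{\psi}(0)$,
(b) $|\hat{k}(\pm 1,t)|\leq 2\delta\hat{k}(0,t)$,
(c) $|n|^{2p}|\hat{k}(n,t)|\leq \delta\hat{k}(0,t)$ for every integer $|n|\geq 2$.
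At $t=0$, (a) is trivial, (c) is forced by the hypothesis $\|\psi\|_{2p+1}\leq\delta\hat{\psi}(0)$ with strict inequality, and (b) follows from the same hypothesis (applied to $n=\pm1$) together with the identity $U_\pm(\psi)=0$ exactly as in the proof of Lemma 2. By continuity of $\hat{k}(n,\cdot)$ we have $T^{*}:=\sup I>0$.

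Next I would argue by contradiction that $T^{*}=T_{\max}$. Suppose $T^{*}<T_{\max}$. On the interval $[0,T^{*}]$ the hypotheses of all three preceding estimate lemmas are met, so we can feed them cyclically into one another:
Lemma \ref{exponentialconvergencetoconstant} upgrades (a) to the tighter range $(1-3\delta)\hat{\psi}(0)\leq\hat{k}(0,t)\leq(1+3\delta)\hat{\psi}(0)$; the second lemma upgrades (b) to the strict bound $|\hat{k}(\pm1,t)|<\delta\hat{k}(0,t)$; and the first lemma upgrades (c) to the strict bound $|n|^{2p}|\hat{k}(n,t)|<\delta\hat{k}(0,t)/2$ say (the gap between the hypothesis $\delta$ and the conclusion $\delta/2$ is what the proof of Lemma 1 actually produces for $\delta$ small). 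All three conditions therefore hold with strict slack at $t=T^{*}$, and by continuity they persist on a small neighbourhood of $T^{*}$, contradicting the definition of $T^{*}$.

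Having established that (a), (b), (c) hold on the whole interval of existence, I would invoke the Trapping Lemma \ref{trappinglemma} to obtain
\[
\left|\hat{k}(n,t)\right|\leq \frac{C\delta\hat{\psi}(0)\,e^{-\gamma|n|t}}{|n|^{2p+1}},\qquad |n|\geq 2,
\]
and combine this with the closing remark of that lemma (which uses $U_{\pm}(k)=0$) to get $|\hat{k}(\pm1,t)|=O(\delta e^{-\gamma t})$. The two estimates together yield a uniform bound of the form $|\hat{k}(n,t)|\leq Ce^{-\gamma t}$ for every $n\neq0$, which is the first conclusion of the proposition; the second conclusion is simply condition (a), now valid on $[0,T_{\max})$.

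The only non-routine point is the bookkeeping of the strict improvements: one must check that the thresholds chosen in the statements of the three estimate lemmas ($4\delta$ versus $3\delta$ in the zeroth mode, $2\delta$ versus $\delta$ in the $\pm1$ modes, and the analogous gap for $|n|\geq 2$) really do produce a strict improvement at every turn of the cycle, uniformly in $t$. This is what prevents the bootstrap from degenerating at $t=T^{*}$ and is the sole reason the lemmas were phrased with two different small constants. Everything else is continuity, and the passage from the finite-dimensional truncation $\mathcal{Z}$ to the full equation follows as in \cite{cortissoz13}.
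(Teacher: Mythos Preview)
Your bootstrap argument is correct and is precisely the natural way to make rigorous what the paper leaves implicit: the paper's own ``proof'' is nothing more than the sentence ``From the previous lemmas we can conclude the following,'' and your open--closed continuity argument is the standard mechanism that turns the cycle of Lemmas~1, 2, \ref{trappinglemma} and \ref{exponentialconvergencetoconstant} into that conclusion. One small remark: you do not need the improvement in (c) to go all the way to $\delta/2$---the strict inequality $<\delta$ that Lemma~1 actually states already suffices for the continuity step, since on the finite-dimensional truncation only finitely many $n$ are in play.
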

 As a consequence of the previous proposition,
 the curvature function
 $k$ 
 remains uniformly bounded as long
 as the solution to (\ref{eq1:curvatureflow}) exists, 
 so the flow exists for all time. Also, it follows that $k$ is converging towards a constant smoothly and exponentially. 
 Indeed, by the Fundamental Theorem of Calculus
\[
\hat{k}(0,s) = \hat{k}(0,t) + \int_{t}^{s}\frac{d}{d\tau}\hat{k}(0,\tau)\ d\tau,
\]
and hence, by Lemma \ref{exponentialconvergencetoconstant} we can take the limit as $s\rightarrow\infty$. Denote
\[
\lim_{s\rightarrow\infty}\hat{k}\left(0,s\right)=\hat{k}\left(0,\infty\right),
\]
and then
\[
\hat{k}(0,\infty) = \hat{k}(0,t) + \int_{t}^{\infty}\frac{d}{d\tau}\hat{k}(0,\tau)\ d\tau.
\]
Therefore
\[
\left|\hat{k}(0,\infty) - \hat{k}(0,t)\right|\leq \int_{t}^{\infty}\left|\frac{d}{d\tau}\hat{k}(0,\tau)\right|\, d\tau\leq Ce^{-\beta\gamma t},
\]
which shows our claim.

In order to improve our estimates, we need to consider the integral form of the equation
\[
\begin{aligned}
\partial_t \hat{k} (n,t) &= \biggl[-n^{2p}\hat{k}^{M}(0,t) + 
\sum_{l=1}^{p-1}\sum_{j_l}\left(-1\right)^ln^{2l}a_{j_l}\hat{k}^{j_l}\left(0,t\right)\biggr]\hat{k}(n,t) \\& \quad- \sum_{q \in \mathcal{Z}-\{n\}} q^{2p}\hat{k}(q,t)\hat{k}^{*M}(n - q,t) + \sum_{A_n \cap \mathcal{Z}^{2p-1}} a_J i^{J \cdot R} Q^R \hat{k}(Q,t).
\end{aligned}
\]
For that, we can use the integrating factor method to obtain
\[
\begin{aligned}
&\hat{k}(n,t)\\ &= \Biggl[\int_{\tau}^{t} \left[\sum_{A_n \cap \mathcal{Z}^{2p-1}} a_J i^{J \cdot R} Q^R \hat{k}(Q,\sigma) - \sum_{q \in \mathcal{Z}-\{n\}} q^{2p}\hat{k}(q,\sigma)\hat{k}^{*M}(n - q,\sigma)\right] e^{-\int_{\tau}^{\sigma} \Xi(s) ds} d\sigma \\ & \quad \quad + \hat{k}(n,\tau)\Biggr]e^{\int_{\tau}^{t} \Xi(s) \ ds},
\end{aligned}
\]
where $\Xi(s)$ is given by
\[
-n^{2p}\hat{k}^{M}(0,s) + 
\sum_{l=1}^{p-1}\sum_{j_l}\left(-1\right)^ln^{2l}a_{j_l}\hat{k}^{j_l}\left(0,s\right).
\]
Using the bounds that we found in our lemmas yield
\small
\[
\begin{aligned}
&\left|\hat{k}(n,t)\right|\\
&\leq \biggl[\int_{\tau}^{t} \biggl|\sum_{A_n \cap \mathcal{Z}^{2p-1}} a_J i^{J \cdot R} Q^R \hat{k}(Q,\sigma) - \sum_{q \in \mathcal{Z}-\{n\}} q^{2p}\hat{k}(q,\sigma)\hat{k}^{*M}(n - q,\sigma)\biggr|\left|e^{-\int_{\tau}^{\sigma} \Xi(s) ds}\right| d\sigma\\
&\quad \quad \quad  \quad + \left|\hat{k}(n,\tau)\right|\biggr]\left|e^{\int_{\tau}^{t} \Xi(s) ds}\right|\\\
&\leq K e^{-n^{2p}\int_{\tau}^t\hat{k}^{M}(0,\tau)\,d\tau + 
\sum_{l=1}^{p-1}\sum_{j_l}\left(-1\right)^ln^{2l}a_{j_l}\int_{\tau}^t\hat{k}^{j_l}\left(0,\tau\right)\,d\tau}.
%&\leq K e^{P[t-\tau]}.
\end{aligned}
\]
\normalsize
The bound $K$ on the outermost time integral can be shown to 
exist due to the decay estimates (which are exponential) that we have shown for
the terms $\hat{k}\left(n,t\right)$.

Hence, by our previous considerations, given $\epsilon>0$, there is $\tau$ such that if $t>\tau$ then
 $$\left(1 - \epsilon\right)\hat{k}\left(0,\infty\right) \leq \hat{k}(0,t) \leq\left(1 + \epsilon\right)\hat{k}\left(0,\infty\right).$$ 
Thence, we can bound
\[
\left|\hat{k}(n,t)\right|\leq K e^{P_{n,\epsilon}[t-\tau]},
\]
where $P_{n,\epsilon}$ is given by
\[
\begin{aligned}
 P_{n,\epsilon}&=-n^{2p}[1-\epsilon]^{M}\hat{k}^{M}(0,\infty) + \\
&\quad\sum_{l=1}^{p-1}\sum_{j_l}\left(-1\right)^ln^{2l}a_{j_l}\left(1+
\sgn\left(\left(-1\right)^l a_{j_l}\right)\epsilon\right)^{j_l}\hat{k}^{j_l}\left(0,\infty\right).\end{aligned}
\]
This finishes the proof of Theorem \ref{maintheorem}, as for $\epsilon>0$ small enough, with $n=2$ we have assumed that
$P_{2,\epsilon}\geq P_{n,\epsilon}$.
%\[
%\begin{aligned}
%P &= -n^{2p}[1-\epsilon]^{M}\hat{k}^{M}(0,\infty) + [in]^{2}\sum_{j_{0,2}}a_{j_{0,2}01\dots0}[1\pm\epsilon]^{j_{0,2}}\hat{k}^{j_{0,2}}(0,\infty) + \cdots\\
%& \quad + [in]^{2p-2}\sum_{j_{0,2p-2}}a_{j_{0,2p-2}0\dots10}[1 \pm \epsilon]^{j_{0,2p-2}}\hat{k}^{j_{0,2p-2}}(0,\infty),
%\end{aligned}
%\]
%we used that we are close the circle, that is $1 - \epsilon \leq \hat{k}(0,t) \leq 1 + \epsilon$ for some $\epsilon > 0$ small, for
%all $t>\tau$ with $\tau$ larege enough. This finishes our proof of ...
%Therefore, with the arguments presented above we proved the following theorem
%\begin{theor}
%There exists a $\delta>0$ such that if
%\[
%\delta\hat{\psi}\left(0\right)\geq \left\|\psi\right\|_{2p+1},
%\]
%then the solution of (\ref{eq1:curvatureflow}) with initial data $\psi$ exists for all time and converges exponentially to a constant, say $k_{\infty}$. %Moreover, for any $\epsilon>0$, there exits a $T_{\epsilon}$ such that for all $t\in\left(T_{\epsilon},\infty\right)$ and any $l=0,1,2,\dots$ we have the %following estimate on the rate of convergence of the solution 
%\[
%\left\|k\left(\theta,t\right)-k_{\infty}\right\|_{C^l\left(\mathbb{S}^1\right)}
%\leq C_le^{\left[-2^{2p}[1-\epsilon]^{M}\hat{k}^{M}(0,\infty) - 2^{2}\sum_{j_{0,2}}a_{j_{0,2}01\dots0}[1\pm\epsilon]^{j_{0,2}}\hat{k}^{j_{0,2}}(0,\infty) + %\cdots\right]t}.
%\]
%\end{theor}
\section{Applications}
\subsection{Polyharmonic flows} 
If $s$ is the length of arc parameter and $k$ the curvature, the polyharmonic flow is given by
\[
\frac{\partial}{\partial t}\gamma=\left(-1\right)^p\left[\frac{\partial^{2p}k}{\partial s^{2p}}\right]\nu.
\]
The curvature then evolves by the equation
\[
\frac{\partial k}{\partial t}=\left(-1\right)^p\left[\frac{\partial^{2p+2}k}{\partial s^{2p+2}} + k^2\frac{\partial^{2p}k}{\partial s^{2p}}\right].
\]
Now using the fact that 
\[
\frac{\partial}{\partial s}=\frac{\partial \theta}{\partial s}\frac{\partial}{\partial \theta}=
k\frac{\partial}{\partial \theta},
\]
it is easy to show that the evolution equation satisfied by the curvature in the case of the polyharmonic flow  satisfies an equation of the form of (\ref{eq1:curvatureflow}) plus the structure conditions imposed on it. In fact, if all the operators $\frac{\partial}{\partial \theta}$ jump 
over all the $k$'s we obtain a term
\[
\left(-1\right)^pk^{2p+2}\left(\frac{\partial^{2p+2}k}{\partial \theta^{2p+2}}+
\frac{\partial^{2p}k}{\partial \theta^{2p}}\right)
\]
whereas, if any of the operators  $\frac{\partial}{\partial \theta}$ acts on any of the $k$'s, 
we obtain a term where there is at least a product of two derivatives of $k$
(of perhaps different order). So in general, the curvature function in the polyharmonic flow satisfies an equation
\begin{equation}
\label{structurepolyharmonic}
\frac{\partial k}{\partial t}=
\left(-1\right)^pk^{2p+2}\left(\frac{\partial^{2p+2}k}{\partial \theta^{2p+2}}+
\frac{\partial^{2p}k}{\partial \theta^{2p}}\right)+H\left(k,\dots,\frac{\partial^{2p+1}k}{\partial \theta^{2p+1}}\right)
\end{equation}
where any term $z_0^{\alpha_0}\dots z_{2p+1}^{\alpha_{2p+1}}$ of the polynomial $H\left(z_0,\dots,z_{2p+1}\right)$
satisfies $\alpha_1+\alpha_2+\dots+\alpha_{2p+1}\geq 2$. In this case, we have then that
\[
G=\left(-1\right)^p z_0^{2p+2}z_{2p}+ H.
\]

For instance, for $p=1$ the flow takes the form
\small
\[
\begin{aligned}
\frac{\partial k}{\partial t} &= \left(-1\right)\left[\frac{\partial^{4}k}{\partial s^{4}} + k^2\frac{\partial^{2}k}{\partial s^{2}}\right]\\
&=- k^4\frac{\partial^4 k}{\partial \theta^4} - k^4\frac{\partial^2 k}{\partial \theta^2} - k\left[\frac{\partial k}{\partial \theta}\right]^4 - 11k^2\frac{\partial k}{\partial \theta}\frac{\partial^2 k}{\partial \theta^2} -4k^3\left[\frac{\partial^2 k}{\partial \theta^2}\right]^2 -7k^3\frac{\partial k}{\partial \theta}\frac{\partial^3 k}{\partial \theta^3} - k^3\left[\frac{\partial k}{\partial \theta}\right]^2.
\end{aligned}
\]
\normalsize
It is then clear that $G$ satisfies the conditions required in the introduction.

Applying Theorem \ref{maintheorem}, 
if we start close to the unit circle, in the sense of the seminorms $\left\|\cdot\right\|_{2p+3}$,
the solution to the  polyharmonic flow exists for all time and converges smoothly and
exponentially fast to a circle (and as shown in \cite{Wheeler} to a unit circle if the
area enclosed by the initial curve is $\pi$).

We can also give rates of convergence. For instance, in the case of $p=1$, 
for any $\epsilon>0$ and large enough times (and assuming that the initial curve encloses an area of $\pi$),
\[
\left|\hat{k}(n,t)\right| \leq K e^{\left[-n^{4}[1-\epsilon]^4+n^{2}[1+\epsilon]^4\right][t-\tau]}.
\]
Also, as it is known that if convergent, when the 
initial curve encloses a region of area $\pi$, the polyharmonic flow satisfies $k\rightarrow 1$, then we have that
for any $\epsilon>0$ and large enough times the following estimate holds
\[
\left\|k\left(\theta,t\right)-1\right\|_{C^l\left(\mathbb{S}^1\right)}\leq C_{l,\epsilon}e^{\left[-16[1-\epsilon]^4+4\left(1+\epsilon\right)^4\right]t},
\]
which improves on known results (see \cite{Wheeler}). The general case of this estimate, as described in
Corollary \ref{applicationpolyharmonic} follows from the structure of the evolution 
equation of the curvature given by (\ref{structurepolyharmonic}).
\\

\subsection{Other flows: A very specific example}
As another example, let us consider a simple plane curve $\gamma$ evolving by the following flow
\begin{equation}
\label{nonstandardflow}
\frac{\partial \gamma}{\partial t} = -k\left[\frac{\partial^2 k}{\partial s^2}\right]N,
\end{equation}
where $s$ is the length of arc parameter, $k$ the curvature and $N$ the exterior normal. 
A calculation shows that the curvature evolves by the following equation
\[
\begin{aligned}
\frac{\partial k}{\partial t} &= - k \left[\frac{\partial^4 k}{\partial s^4}\right] - k^3\left[\frac{\partial^2 k}{\partial s^2}\right] - \left[\frac{\partial^2 k}{\partial s^2}\right]^2 - 2\left[\frac{\partial k}{\partial s}\right]\left[\frac{\partial^3 k}{\partial s^3}\right]\\
&= - k^5\left[\frac{\partial^4 k}{\partial \theta^4}\right] - k^5\left[\frac{\partial^2 k}{\partial \theta^2}\right] - 5k^4\left[\frac{\partial^2 k}{\partial \theta^2}\right]^2 - k^4\left[\frac{\partial k}{\partial \theta}\right]^2 \\ &\quad- 2k^2\left[\frac{\partial k}{\partial \theta}\right]^4 - 21k^3\left[\frac{\partial k}{\partial \theta}\right]^2\left[\frac{\partial^2 k}{\partial \theta^2}\right] - 9k^4\left[\frac{\partial k}{\partial \theta}\right]\left[\frac{\partial^3 k}{\partial \theta^3}\right].
\end{aligned}
\]
First of all notice that for a given $\delta>0$ we have that 
\begin{eqnarray*}
P_{x,4\delta}&=&-x^4\left(1-4\delta\right)^5\left(\hat{\psi}\left(0\right)\right)^5+x^2\left(1+4\delta\right)^5\left(\hat{\psi}\left(0\right)\right)^5\\
&=&\left(-x^4\left(1-4\delta\right)^5+x^2\left(1+4\delta\right)^5\right)\left(\hat{\psi}\left(0\right)\right)^5\leq -0.5\left(\hat{\psi}\left(0\right)\right)^5,
\end{eqnarray*}
whenever $0<\delta<\frac{1}{32}$.

As a consequence, our computations give us for this case that for initial conditions near the circle of radius 1, with $\hat{\psi}\left(0\right)=1$, 
the solution to (\ref{nonstandardflow}) converges towards a curve whose curvature converges exponentially towards, say
$k_{\infty}$, and 
for any $\epsilon>0$,
there is a time $T_{\epsilon}>0$ and a constant $K_{\epsilon}$ such that for $t>T_{\epsilon}$  
the following estimate holds
\[
\left|\hat{k}(n,t)\right| \leq K_{\epsilon} e^{\left[-n^{4}[1-\epsilon]^5 + n^{2}[1+\epsilon]^{5}\right]k_{\infty}^5[t-\tau]},
\]
and a convergence rate of the curvature towards $k_{\infty}$ is  given by (that is,
for any $\epsilon>0$ there is a constant $C_{l,\epsilon}$ such that)
\[
\left\|k\left(\theta,t\right)-k_{\infty}\right\|_{C^l\left(\mathbb{S}^1\right)} \leq 
C_{l,\epsilon}e^{k_{\infty}^5\left[-16[1-\epsilon]^5 + 4[1+\epsilon]^5\right]t},
\]
which means that the rate of convergence is as close to $\exp\left(-12 k_{\infty}^5\right)$ as wished.

\subsection{A more general family of examples I}
In this section we will consider flows of the form (\ref{generalflow}) with
\begin{equation}
\label{specialF}
F\left(k,k_{s},\dots, k^{\left(2p\right)}_s\right)=\sum_{j=1}^{2p}a_j\frac{\partial^{2j} k}{\partial s^{2j}}
+H,
\end{equation}
where $H$ is again a polynomial on the derivatives of $k$ of order less than $2p$ and where
we only allow terms which contain products of two or more derivatives of $k$ order bigger or equal than 1.
To give an example, $H$ may contain terms of the form
$k_sk_{ss}$ (as long as $2p\geq 4$) or $k\left(k_s\right)^2$.

For a constant $W$ such that given $\psi$ there exists a $\delta>0$ for which
whenever
\[
W=\hat{\psi}\left(0\right)\geq \delta \left\|\psi\right\|_{2p+3}
\]
then the conclusion of Theorem \ref{maintheorem} holds, then we say that
{\it the flow (\ref{generalflow}) is stable around $W$ (which corresponds to a circle of
radius $W^{-1}$)}.

A straightforward computation shows that the curvature $k$ satisfies an equation
\[
\frac{\partial k}{\partial t}=\frac{\partial^{2}}{\partial s^2}F+k^2F.
\]
Then, the polynomial that determines the stability of the flow is the following
\begin{eqnarray*}
&\left(-1\right)^{p+1}a_p\hat{\psi}\left(0\right)^{2p+2}n^{2p+2}&\\
&+&\\
&\sum_{j=2}^p\left(-1\right)^j\left(a_j\hat{\psi}\left(0\right)^{2}+a_{j-1}\right)\hat{\psi}\left(0\right)^{2j}n^{2j}&\\
&+&\\
&(-1)a_1\hat{\psi}\left(0\right)n^2,&
\end{eqnarray*}
since the negativity of this polynomial for $n\geq 2$ implies the condition
on $P_{n,4\delta}$, for $\delta>0$, small enough needed in the hypothesis of Theorem \ref{maintheorem}.
Notice then that if $W=\hat{\psi}\left(0\right)$ is large enough, the previous polynomial
is strictly negative for $n\geq 2$. Hence we have from Theorem \ref{maintheorem} the following.
\begin{prop}\label{family1}
Assume that $\mbox{sgn}\left(a_p\right)=\left(-1\right)^p$.
There exists an $M$ such that if $W\geq M$ then the flow (\ref{generalflow}) with
$F$ of the form (\ref{specialF}) is stable around $W$.
\end{prop}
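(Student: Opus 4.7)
The plan is to verify the hypothesis of Theorem \ref{maintheorem}, specifically the bound $P_{n,4\delta}<c<0$ for all integers $n\geq 2$. By the aside immediately following Theorem \ref{maintheorem}, it suffices to prove this bound at $\delta=0$; that is, to show that the explicit polynomial displayed just before the proposition (call it $P(n;W)$) is uniformly bounded above by a strictly negative quantity for $n\geq 2$, provided $W=\hat{\psi}(0)$ is taken large enough. The whole strategy turns on the observation that the sign hypothesis $\sgn(a_p)=(-1)^p$ makes the two terms of highest weight in $W$ combine with a favourable sign.

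First I would group together the leading $n^{2p+2}$ term and the $j=p$ contribution of the middle sum. The leading term equals $(-1)^{p+1}a_p W^{2p+2}n^{2p+2}=-|a_p|W^{2p+2}n^{2p+2}$ under the hypothesis, and the $j=p$ summand reads $(-1)^p(a_p W^2+a_{p-1})W^{2p}n^{2p}$, whose part of weight $W^{2p+2}$ equals $+|a_p|W^{2p+2}n^{2p}$. These two pieces combine to
\[
|a_p|\,W^{2p+2}n^{2p}\bigl(1-n^2\bigr)\leq -3|a_p|\,W^{2p+2}n^{2p}\quad\text{for every }n\geq 2.
\]
All remaining contributions --- the leftover $(-1)^p a_{p-1}W^{2p}n^{2p}$ from $j=p$, the complete $j=2,\ldots,p-1$ summands, and the stray $-a_1 Wn^2$ --- carry a power of $W$ strictly less than $2p+2$ together with a power of $n$ at most $2p$. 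A crude triangle-inequality bound then yields a constant $C$, depending only on $p$ and on $a_1,\ldots,a_p$, for which the modulus of this remainder is at most $C\,W^{2p}n^{2p}$ whenever $W\geq 1$ and $n\geq 2$.

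Putting the two estimates together gives
\[
P(n;W)\leq n^{2p}W^{2p}\bigl(-3|a_p|\,W^2+C\bigr),
\]
which is at most $-n^{2p}W^{2p}\leq -2^{2p}$ as soon as $W^2\geq (C+1)/(3|a_p|)$. Setting $M:=\max\bigl(1,\sqrt{(C+1)/(3|a_p|)}\bigr)$ (or any larger value) makes $P(n;W)$ uniformly negative on $n\geq 2$ whenever $W\geq M$. Combined with the structural conditions on $G$, which are inherited from the form (\ref{specialF}) via the identity $k_t=\partial_s^2 F+k^2 F$ discussed earlier in the section, Theorem \ref{maintheorem} then delivers the conclusion. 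The one delicate point is the sign bookkeeping: one must line up the factors $(-1)^{p+1}$, $(-1)^p$, and $(-1)^j$ consistently so that the cancellation at the $W^{2p+2}$ weight points in the correct direction. Once the hypothesis $\sgn(a_p)=(-1)^p$ is invoked to arrange this cancellation, the remaining terms are all strictly lower order in $W$ and are absorbed by enlarging $M$.
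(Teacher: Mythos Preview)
Your proof is correct and follows the same line as the paper's. The paper simply asserts that for $W$ large enough the displayed polynomial is strictly negative for all $n\geq 2$ and then invokes Theorem~\ref{maintheorem}; your write-up supplies the details by isolating the $W^{2p+2}$-weight contribution $|a_p|\,W^{2p+2}n^{2p}(1-n^2)$ and absorbing the lower-weight remainder, which is precisely the mechanism the paper has in mind.
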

This proposition basically tells us that in the case of a flow with $F$ of the form 
(\ref{specialF}), its stability depends on the sign of the derivative of the
largest order, and that it will hold around circles that are small enough.

\subsection{A more general family of examples II}
Now we consider 
\begin{equation}
\label{specialF2}
F\left(k,k_{s},\dots, k^{\left(2p\right)}_s\right)=\sum_{j=0}^{p-1}a_{p-j}k^{2j}\frac{\partial^{2p-2j}k}{\partial s^{2p-2j}}
+H,
\end{equation}
with $H$ as in the previous section. This type of $F$ includes as an example the case of
\[
V=k_{ssss}+k^2k_{ss}-\frac{1}{2}k\left(k_s\right)^2,
\]
which is associated to the (normal) evolution of a curve, which is related to the
steepest gradient flow of the functional (see \cite{AndrewsWheeler})
\[
E\left[\gamma\right]=\int_{\gamma}\left(k_s\right)^2\,ds.
\]

In the case that $F$ has the form (\ref{specialF2}), the polynomial that determines stability is given by
\begin{eqnarray*}
&\left(-1\right)^{p+1}a_p\hat{\psi}\left(0\right)^{2p+2}n^{2p+2}&\\
&+&\\
&\sum_{j=0}^{p-2}\left(-1\right)^{p-j}\left(a_{p-j}+a_{p-j-1}\right)\hat{\psi}\left(0\right)^{2p+2}n^{2p-2j}&\\
&+&\\
&(-1)a_1\hat{\psi}\left(0\right)^{2p+2}n^{2},&
\end{eqnarray*}
and hence, a conclusion we can draw without much difficulty is the following.
\begin{prop}\label{family2}
Assume that $\mbox{sgn}\left(a_p\right)=\left(-1\right)^p$,and
\[
\frac{3}{4}\left|a_p\right|\geq \sum_{j=1}^{p-1}\frac{5}{4^{j+1}}\left|a_{p-j}\right|,
\]
 then the flow (\ref{generalflow}) with
$F$ of the form (\ref{specialF2}) is stable around any $\hat{\psi}\left(0\right)=W>0$.
\end{prop}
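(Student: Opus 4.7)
The plan is to reduce the claim to a purely algebraic inequality on the explicit stability polynomial displayed just above the statement. By the aside immediately after Theorem \ref{maintheorem}, it suffices to verify the $P_{n,4\delta}$ hypothesis at $\delta=0$; for the family (\ref{specialF2}) this amounts to showing that
\[
\tilde{P}(n) := (-1)^{p+1} a_p\, n^{2p+2} + \sum_{j=0}^{p-2}(-1)^{p-j}(a_{p-j}+a_{p-j-1})\, n^{2p-2j} - a_1\, n^{2}
\]
is strictly negative for every integer $n\geq 2$. A pleasant feature of (\ref{specialF2}) is that every term of the displayed stability polynomial is homogeneous of degree $2p+2$ in $\hat\psi(0)=W$; after pulling out this common factor, the remaining polynomial $\tilde{P}$ is independent of $W$, so stability around \emph{every} $W>0$ collapses to this single $W$-free polynomial inequality.

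The sign hypothesis $\sgn(a_p)=(-1)^{p}$ turns the leading term into the desired $-|a_p|\,n^{2p+2}$. For the subleading terms I would apply the triangle inequality termwise to obtain
\[
\tilde{P}(n)\leq -|a_p|\, n^{2p+2} + \sum_{j=0}^{p-2}(|a_{p-j}|+|a_{p-j-1}|)\, n^{2p-2j} + |a_1|\, n^{2},
\]
divide through by $n^{2p+2}$, and use $n^{-2k}\leq 4^{-k}$ (valid for all $n\geq 2$) to reduce to the purely numerical bound
\[
\frac{\tilde{P}(n)}{n^{2p+2}} \leq -|a_p| + \sum_{j=0}^{p-2}\frac{|a_{p-j}|+|a_{p-j-1}|}{4^{j+1}} + \frac{|a_1|}{4^{p}}.
\]

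The final step is a careful reindexing of the right-hand side: split the double sum into its two pieces, shift $j'=j+1$ on the piece coming from $|a_{p-j-1}|$, and then collect equal subscripts. The coefficient $|a_p|$ appears only once, with weight $1/4$; the coefficient $|a_1|$ appears twice (with weights $1/4^{p-1}$ and $1/4^{p}$, summing to $5/4^{p}$, once the stray $|a_1|/4^p$ term is absorbed); and for each $1\leq j\leq p-2$ the intermediate coefficient $|a_{p-j}|$ picks up two contributions of sizes $1/4^{j+1}$ and $1/4^{j}$, whose sum is $5/4^{j+1}$. After this bookkeeping the bound collapses to
\[
\frac{\tilde{P}(n)}{n^{2p+2}} \leq -\tfrac{3}{4}|a_p| + \sum_{j=1}^{p-1}\frac{5\,|a_{p-j}|}{4^{j+1}},
\]
which is nonpositive precisely by the hypothesis, strict for $n>2$ (and strict at $n=2$ under a strict form of the hypothesis), giving the required negativity uniformly in $n\geq 2$.

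The main (and essentially only) obstacle is performing this reindexing correctly so that the constants land in exactly the clean form $3/4$ and $5/4^{j+1}$ of the hypothesis; no analytic subtlety is involved beyond this algebraic massage, since the overall analytic machinery has already been bundled into Theorem \ref{maintheorem}.
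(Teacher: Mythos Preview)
Your argument is correct and is exactly the computation the paper leaves implicit with the phrase ``a conclusion we can draw without much difficulty.'' The paper gives no proof beyond displaying the stability polynomial, so your filling-in of the reindexing is the intended content; the observation that the polynomial is homogeneous of degree $2p+2$ in $W$, so that stability around every $W>0$ reduces to a single $W$-free inequality, is the key structural point and you state it cleanly.

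One small remark on the edge case you flag: when the hypothesis holds with equality, your chain of inequalities gives only $\tilde P(2)\leq 0$, not the strict $\tilde P(2)<c<0$ that Theorem~\ref{maintheorem} requires. In fact this is rarely a genuine problem: for the triangle inequality in your second displayed estimate to be an equality one would need, for each $j$, that $a_{p-j}$ and $a_{p-j-1}$ share the sign $(-1)^{p-j}$, and adjacent values of $j$ then force contradictory signs on the intermediate coefficients unless they vanish. So in nondegenerate cases $\tilde P(2)$ is strictly negative even under equality in the hypothesis. This is a wrinkle in the paper's statement (which uses $\geq$) rather than a defect in your argument, and you are right to note it.
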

Finally, notice that $V$ above satisfies the hypothesis of the proposition.

\subsection{Regularity}

Here we discuss the sharpness of requirement (\ref{initialconditionregularity}) in the statement
of Theorem \ref{maintheorem}.
We start 
with the case of polyharmonic flows, and
we shall center our discussion in the case $p=1$.

We will show that condition (\ref{initialconditionregularity}) which amounts to
\[
\hat{\psi}\left(0\right)\geq \delta\left\|\psi\right\|_{5},
\]
can be replaced by a milder
\[
\hat{\psi}\left(0\right)\geq \delta\left\|\psi\right\|_{\frac{5}{2}+\eta}
\]
where $\eta>0$ is small.

As the reader has seen so far, in the evolution of a Fourier wave number 
there are good and bad terms. By a good term we mean a term whose existence, if it were alone, affects 
the evolution of a Fourier wave number 
$\hat{k}\left(n,t\right)$ by making its norm decrease. The best of these terms
is given by
\begin{equation}
\label{goodterm}
-\hat{k}\left(0,t\right)^4\hat{k}\left(n,t\right),
\end{equation}
which comes from taking the Fourier transform of the term
\[
-k^4\frac{\partial^4 k}{\partial \theta^4}.
\]
Regarding bad terms, those that could offset the evolution of a Fourier wave number 
towards values of smaller norm, 
there is what we might call a very bad term (it has the largest possible order), which in this case is given by a sum
\begin{eqnarray*}
\sum_{q_1,q_2,q_3,q_4}\hat{k}\left(q_1,t\right)\hat{k}\left(q_1,t\right)\hat{k}\left(q_2,t\right)\hat{k}\left(q_3,t\right)
\hat{k}\left(q_4,t\right)&\\
\qquad\left(n-q_1-q_2-q_3-q_4\right)^{4}
\hat{k}\left(n-q_1-q_2-q_3-q_4,t\right),
\end{eqnarray*}
but in this sum, at least there are two $q$'s which are nonzero. If we assume that
\[
\hat{k}\left(q,t\right)\sim \frac{\delta}{\left|q\right|^{\frac{5}{2}+\eta}},
\]
then one can show that the sum above is of order $\delta^2 n^{\frac{3}{2}-\eta}$. But then the
term (\ref{goodterm}) is of order $\delta n^{\frac{3}{2}-\eta}$, so it will dominate the behavior of the
evolution of $\hat{k}\left(n,t\right)$, and then there will be stability for the flow.

As yet another example, working with this heuristics and which can be made rigourous without much difficulty, it
can be shown that the flow studied in the recent paper \cite{AndrewsWheeler}, for which 
\[
F=k_{ssss}+k^2k_{ss}-\frac{1}{2}k\left(k_s\right)^2,
\]
it is enough 
to ask for a condition
\[
\hat{\psi}\left(0\right)\geq \delta\left\|\psi\right\|_{\frac{7}{2}+\eta},
\]
instead of the stronger
\[
\hat{\psi}\left(0\right)\geq \delta\left\|\psi\right\|_{7},
\]
to obtain stability.

\end{document}